\newtheorem{theorem}{Theorem}
\newtheorem{lem}[theorem]{Lemma}
\newtheorem{prop}[theorem]{Proposition}
\newtheorem{cor}[theorem]{Corollary}
\newtheorem{defn}[theorem]{Definition}
 \newcommand{\R}{\mathbb{R}}
  \newcommand{\Z}{\mathbb{Z}}
  \newcommand{\N}{\mathbb{N}}
   \newcommand{\C}{\mathbb{C}}
  \newcommand{\PR}{\mathbb{P}}
    \newcommand{\rk}{\textrm{rk}}
  \title{Limits Under Conjugacy of the Diagonal Subgroup in $SL_n(\R)$}
  \author{Arielle Leitner}
\begin{document}
  
  \maketitle 
  
  \begin{abstract} We give quadratic bounds on the dimension of the space of conjugacy classes of subgroups of $SL_n(\R)$
  that are limits under conjugacy of the diagonal subgroup. We give the first explicit examples of abelian $n-1$ dimensional subgroups of $SL_n(\R)$ which are \emph{not} such a  limit, and show all such abelian groups are limits of the diagonal group iff $n \leq 4$. 
  \end{abstract}

  

Let $C \leq SL_n (\R)$ be the group of positive diagonal matrices, which is a Cartan subgroup.  The  limits of $C$ under conjugacy are classified for $n\leq 4$ in \cite{Haettel},  \cite{LeitnerSL3}, \cite{LeitnerGCP}.  It is an open problem to classify the conjugacy limits of $C$ when $n \geq 5$. 

  The set of all closed subgroups of a group is a Hausdorff topological space with the {\em Chabauty topology} on closed sets (see \cite{Chab}, \cite{Harpe}, \cite{Haettel}).  
  Following notation in \cite{IM},  the set of all closed abelian subgroups $\widehat{Ab}(n)= \{ G \leq SL_n(\R): G \cong (\R^{n-1}, +)\}$,  is a subspace, as is the set of conjugacy limit groups $\widehat{Red}(n) = \{ G \leq SL_n (\R): G \textrm { is a limit of } C \}$.  Taking the quotients by conjugacy, we have two topological spaces with the quotient topology:   $Ab(n) = \widehat{Ab}(n) /\textrm{conjugacy}$ and $Red(n) =\widehat{Red}(n)/ \textrm{conjugacy}$. In general these are not Hausdorff. For example, Theorem 16 in \cite{LeitnerSL3} shows $Red(2)= \{ C, P \}$, where $P$ is the parabolic group.  Since $C \to P$, every neighborhood of $P$ contains $C$.

 
Every conjugacy limit of $C$ is isomorphic to $\R^{n-1}$, so $Red(n) \subset Ab(n)$ by \cite{IM}, Proposition 1.  From \cite{LeitnerSL3} and \cite{Haettel},
we know $Ab(3) = Red(3)$, which has 5 points corresponding to 5 conjugacy classes of groups,  and     $Ab(4) = Red(4) $, which has 15 points.  When $n\leq 6$, Suprenko and Tyshkevitch, \cite{SupTysh}, have classified maximal commutative nilpotent (i.e. $\textrm{ad}_x$ is nilpotent for all $x \in X$) subalgebras of $\mathfrak{sl}_n(\C)$.  Their results imply $Ab(5)$ has finitely many points, so $Red(5)$ has finitely many points.   Iliev and Manivel, \cite{IM}, ask if $Red(n)$ is finite when $n \geq 6$ (Question C).  The answer follows for $n\ge 7$ from the main result of this paper:
  
\begin{theorem}\label{maincov}  
If $n \geq 7$, then
$ \frac{n^2 -8n +12}{8} \leq \dim Red(n) \leq n^2 -n$.
\end{theorem}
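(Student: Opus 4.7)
The plan is to prove the two inequalities separately.

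\emph{Upper bound.} By definition, $\widehat{Red}(n)$ is the Chabauty-closure of the single $SL_n(\R)$-conjugation orbit of $C$. The normalizer $N_{SL_n(\R)}(C)$ is an extension of $C$ by the Weyl group $S_n$, so $\dim N_{SL_n(\R)}(C) = n-1$; hence the orbit, as the homogeneous space $SL_n(\R)/N(C)$, has dimension $(n^2-1)-(n-1) = n^2-n$, and its Chabauty-closure has the same dimension. Since the quotient map $\widehat{Red}(n) \to Red(n)$ is a continuous surjection, $\dim Red(n) \leq n^2 - n$.

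\emph{Lower bound.} The strategy is to embed a smooth manifold $M$ of dimension at least $(n-2)(n-6)/8$ into $Red(n)$ by exhibiting a family $\{\mathfrak{a}_\mu\}_{\mu \in M}$ of pairwise non-$SL_n(\R)$-conjugate $(n-1)$-dimensional abelian subalgebras of $\mathfrak{sl}_n(\R)$, each realized as a Chabauty limit of $C$. My candidate construction is to fix a partition $n = a+b$ with $a,b$ close to $n/2$ and to consider subalgebras supported in the $ab$-dimensional nilpotent abelian Lie algebra $\mathfrak{n}_{a,b} \subset \mathfrak{sl}_n(\R)$ of matrices with nonzero entries only in the upper-right $a \times b$ off-diagonal block. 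For each $\mu$ in an appropriate slice of the Grassmannian $Gr(n-1, ab)$, I would build an explicit divergent test sequence $g_t^{(\mu)}$---typically divergent diagonal conjugation composed with a fixed unipotent whose coordinates encode $\mu$---so that the Chabauty limit of $g_t^{(\mu)} C (g_t^{(\mu)})^{-1}$ is the connected abelian subgroup with Lie algebra $\mathfrak{a}_\mu$. The moduli space $M$ would then be the slice modulo the action of the Levi $S(GL_a \times GL_b)$ of the block-preserving parabolic.

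\emph{Main obstacles.} I expect two sticking points. The first and hardest is non-conjugacy: one must recover the $a \times b$ block intrinsically from the Lie-algebra data of $\mathfrak{a}_\mu$ (for instance, as the flag built from the common kernels and images of its elements), so that any $SL_n(\R)$-conjugation between two family members must lie in the block-preserving parabolic, reducing the equivalence to the Levi action. The second is to isolate the correct subvariety of the Grassmannian: the full $Gr(n-1, ab)$ modulo Levi has dimension of order $n^3$, far larger than the target bound, so most $(n-1)$-dimensional abelian subspaces of $\mathfrak{n}_{a,b}$ are \emph{not} limits of $C$. Identifying the precise locus of limits, computing its dimension after the Levi quotient, and verifying that $(n-2)(n-6)/8$ arises after optimizing $(a,b)$ is the technical heart of the argument.
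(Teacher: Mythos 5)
Your proposal correctly identifies the shape of both halves of the argument, but as written it is an outline with the hardest steps deferred, not a proof. On the upper bound: your dimension count for the orbit $SL_n(\R)/N(C)$ matches the paper's, but the step ``its Chabauty-closure has the same dimension'' is not automatic for covering dimension (a dense subset can have strictly smaller dimension than its closure, e.g.\ $\mathbb{Q}\subset\R$). The paper closes this by observing that the set of conjugates of $C$ is semi-algebraic, so its frontier has strictly smaller dimension (Bochnak--Coste--Roy, Propositions 2.8.2 and 2.8.13); you need some such structural input.

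The real gap is the lower bound, where you explicitly flag the two decisive steps as ``obstacles'' rather than resolving them. The paper's solution is worth comparing to your plan. First, it does not take arbitrary $(n-1)$-dimensional subspaces of the off-diagonal block $\mathfrak{n}_{a,b}$ (you are right that most of these are not limits of $C$); instead it uses a block of size $(m+1)\times n$ with $k=m+n+1$, in which the last row carries $n$ free parameters and each of the remaining $m$ rows is a single parameter times a fixed row vector of an $m\times n$ matrix $T$. This gives a group $L_T\cong\R^{m+n}$ of the correct dimension, and an explicit conjugating sequence $P_r$ (divergent entries $T_{ij}r$ and $r^2$ in the block) exhibits $L_T$ as a limit of $C$ whenever $T$ has no zero row. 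Second, and this is the missing invariant in your plan: after reducing to the block-preserving parabolic, you still need an invariant separating the Levi orbits on your slice, and the paper gets one from the projective dynamics of $L_T$ on $\R P^{k-1}$. The exceptional orbit closures of $L_T$ are the preimages of $m$ hyperplanes in general position in a copy of $\R P^{n-1}$, i.e.\ $m$ points in general position in the dual space, and the \emph{unordered generalized cross ratio} of this augmented basis is a complete projective invariant (the paper's Proposition 4). This pins down $T$ up to the $PGL_n$-action on its rows, giving a parameter space of dimension $(n-1)(m-n-1)=nm-n^2-m+1$, which optimized over $m+n+1=k$ with $m,n\in\Z$ yields $(k^2-8k+12)/8$ --- the same quantity as your $(n-2)(n-6)/8$, but you have not supplied the construction, the limit verification, or the separating invariant needed to reach it.
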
 

The upper bound is given in \cite{IM}. This leaves the case $n=6$ open.
Haettel, and Iliev and Manivel show $\dim Red(n) < \dim Ab(n)$ for $n >6$.   We also give the first explicit examples of elements of $Ab(n) - Red(n)$ for $n=5,6,8$ by describing certain properties of limit groups, which answers Question A in \cite{IM}.  In particular, we show 

\begin{theorem}\label{mainex} If $n \leq 4$, then $Ab(n) = Red(n)$.  If $n \geq 5$, then $Red(n) \subsetneq Ab(n)$.
\end{theorem}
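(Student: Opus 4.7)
The case $n \leq 4$ is already settled in \cite{LeitnerSL3, Haettel}, where $Ab(n)$ and $Red(n)$ are classified explicitly and shown to coincide; the task is therefore to prove $Red(n) \subsetneq Ab(n)$ for $n \geq 5$. My plan is to record a closed structural property possessed by every group in $Red(n)$, and then to exhibit, for each $n \geq 5$, an explicit abelian subgroup $H \cong \R^{n-1}$ that violates this property.

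The property I would use is that every element of any $H \in Red(n)$ has all of its eigenvalues positive and real. Indeed, each $c \in C$ has positive real eigenvalues, conjugation preserves the eigenvalue multiset, and the locus of matrices in $SL_n(\R)$ whose characteristic polynomial splits over $\R$ is closed under matrix convergence (the coefficients of the characteristic polynomial vary continuously, the subset of monic real polynomials with only real roots is closed, and $\det = 1$ upgrades non-negative to positive). Since every element of a Chabauty limit $H = \lim g_k C g_k^{-1}$ is a matrix-convergence limit of a sequence $g_k c_k g_k^{-1}$ with $c_k \in C$, the property passes to the limit.

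The core content is then to construct, for each $n \geq 5$, a closed abelian subgroup $H \leq SL_n(\R)$ with $H \cong \R^{n-1}$ containing an element with a non-real eigenvalue; by the preceding step such an $H$ cannot lie in $Red(n)$. The subtle point is that for $H$ to be isomorphic to $\R^{n-1}$ rather than $\R^a \times (S^1)^b$, the exponential $\mathfrak{h} \to H$ must be a diffeomorphism onto a closed subset, so $\mathfrak{h}$ cannot contain any element generating a compact one-parameter subgroup; in particular, no semisimple element of $\mathfrak{h}$ may have all its eigenvalues purely imaginary. This disqualifies the most obvious candidates, such as non-split Cartans or subalgebras containing a block $\begin{pmatrix} 0 & -1 \\ 1 & 0 \end{pmatrix}$. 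To get around this, I would build $\mathfrak{h}$ around a \emph{spiral} semisimple element with eigenvalues $\alpha \pm i\beta$, $\alpha, \beta \neq 0$ (whose one-parameter subgroup is already closed and diffeomorphic to $\R$), and fill out the remaining $n-2$ dimensions using commuting nilpotent and split semisimple elements, checking at each step that no purely imaginary semisimple direction is introduced. Explicit realizations of this shape can be written down for $n = 5, 6, 8$.

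For $n \geq 7$ the conclusion is additionally forced by a dimension count: the upper bound $\dim Red(n) \leq n^2 - n$ from Theorem~\ref{maincov}, combined with the inequality $\dim Ab(n) > \dim Red(n)$ proved for $n > 6$ by Haettel and by Iliev--Manivel, gives $Red(n) \subsetneq Ab(n)$ without any explicit construction. The step I expect to be the main obstacle is the explicit construction itself: the combined requirements that $\mathfrak{h}$ be abelian, $(n-1)$-dimensional, exponentiate to a closed $\R^{n-1}$, and contain a non-real-eigenvalue element are jointly rigid, and the centralizer of a generic spiral semisimple element is typically too small, so one must impose careful degeneracies on the remaining eigenvalues of the spiral generator to create enough commuting nilpotent room without reintroducing a rotational direction.
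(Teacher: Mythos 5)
Your necessary condition is correct: every element of a Chabauty limit of $C$ is a limit of conjugates of positive diagonal matrices, and the set of matrices with all eigenvalues positive and real is closed inside $SL_n(\R)$. The dimension-count reduction for $n\geq 7$ is also fine and matches what the paper does there (citing Haettel and Iliev--Manivel). But there is a genuine gap exactly where you predict the difficulty to lie: for $n=5$ the construction you need does not exist, so your invariant cannot separate $Ab(5)$ from $Red(5)$. Indeed, suppose $\mathfrak{h}\subset\mathfrak{sl}_5(\R)$ is abelian of dimension $4$ with $\exp(\mathfrak h)$ closed and isomorphic to $\R^4$, and some $X\in\mathfrak h$ has a non-real eigenvalue. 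Since $X_s$ is a polynomial in $X$, all of $\mathfrak h$ preserves the splitting $\R^5=V_1\oplus V_2$ into the non-real and real generalized eigenspaces of $X$. If $\dim V_1=2$, then $\mathfrak h\subset\C\oplus\mathfrak{gl}_3(\R)$; the projection to $\mathfrak{gl}_3(\R)$ has abelian image of dimension at most $3$ (Schur's bound $\lfloor 9/4\rfloor+1$), and its kernel lies in $\{(iy,0)\}$ because the trace condition kills the real part, so a nonzero kernel element is semisimple with purely imaginary spectrum and generates a torus in the closure of $\exp(\mathfrak h)$ --- impossible for a closed copy of $\R^4$. Hence $\dim\mathfrak h\le 3$, a contradiction. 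If $\dim V_1=4$, then $\mathfrak h$ embeds (again via the trace condition) into a maximal real abelian subalgebra of $\C\oplus\C$ or of $\mathfrak{gl}_2(\C)$; since the $\C$-span of a real abelian subalgebra is abelian, $\mathfrak h$ lands in a $2$-dimensional complex abelian subalgebra ($\C^2$ or $\C I+\C N$), each of which forces a purely imaginary semisimple, trace-zero element ($\mathrm{diag}_\C(i,-i)$ or $iI_\C$) into $\mathfrak h$ --- again a compact direction. So every group in $\widehat{Ab}(5)$ has all elements with positive real spectrum, and your obstruction is vacuous precisely in the first case the theorem requires. (For $n=6$ your plan can be carried out, e.g.\ with a $\C\oplus(\R I_4+\mathfrak n)$ configuration, but that does not rescue $n=5$.)

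The paper avoids this by using a different closed property: a limit of $C$ is \emph{flat}, i.e.\ it is the intersection of a linear subspace of $\mathrm{End}(\R^n)$ with $SL_n(\R)$, since $C$ is flat and conjugation acts linearly on $\mathrm{End}(\R^n)$. The counterexamples $M_5,M_6$ are then \emph{unipotent} abelian groups of dimension $n-1$ whose defining equations involve the quadratic entry $a^2/2$, so they are not flat; note that being unipotent they have all eigenvalues equal to $1$ and hence satisfy your eigenvalue condition, which is another way of seeing that your invariant is too weak here. If you want to keep your framework, you would need to replace the eigenvalue obstruction for $n=5,6$ by the flatness obstruction (or some other invariant that sees unipotent examples).
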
 
  
  
  \begin{section}{A Family of Conjugacy Limit Groups}
  
  In this section, we define a family of groups, $L_T$, and show each is a conjugacy limit of the Cartan subgroup.







\begin{defn}\label{defLT}
Let $T$ be an $m$ by $n$ matrix, and $\rho_T : \R^{m+n} \to SL_{m+n+1} (\R)$ be the homomorphism given by 

$$\rho_T ( a_1, ...,  a_{m}, b_1, ..., b_n )=    
 {\left(\begin{array}{@{}c@{\quad}c}
 \underbrace{ {\begin{array}{ccc}
 \begin{matrix}
1 &  0 & \ldots &0\\
0 &  1& \ldots & 0\\
\vdots & \vdots & \ddots & \vdots\\
0 & 0& \ldots & 1
\end{matrix} 
    \end{array}}}_{(m+1) \times (m+1)}
 &
 {\begin{array}{ccc}
 \begin{matrix}
T_{11}a_1 &  T_{12}a_1  & \ldots & T_{1n}a_1\\
T_{21} a_2 &  T_{22} a_2& \ldots & T_{2n} a_2\\
\vdots & \vdots & \ddots & \vdots\\
T_{m1}a_{m}  &   T_{m2} a_{m}      &\ldots & T_{mn}a_{m}\\
b_1 & b_2 & \ldots & b_n
\end{matrix} 
    \end{array}}    \\ \\
 \underbrace{ {\begin{array}{ccc}
 \begin{matrix}
0 &  0 & \ldots &0\\
0 &  0& \ldots & 0\\
\vdots & \vdots & \ddots & \vdots\\
0 & 0& \ldots & 0
\end{matrix} 
    \end{array}}}_{n \times (m+1)}
     & 
    \underbrace{  {\begin{array}{ccc}
 \begin{matrix}
1 &  0 & \ldots &0\\
0 &  1& \ldots & 0\\
\vdots & \vdots & \ddots & \vdots\\
0 & 0& \ldots & 1
\end{matrix} 
    \end{array}}}_{n \times n}
  \end{array}\right)}. $$
  
The image of $\rho_T$ is a group, $ L_T \leq SL_{m+n+1} (\R)$. 

\end{defn} 

One may easily check that $\rho_T $ is a homomorphism and $L_T$ is a group, since matrix multiplication is given by 
$$\left( \begin{array}{c|c} 
I & P \\
\hline
0&I 
\end{array}\right) 
\left( \begin{array}{c|c} 
I & Q \\
\hline
0&I 
\end{array}\right)
=
\left( \begin{array}{c|c} 
I & P+Q \\
\hline
0&I 
\end{array}\right) . $$

\begin{lem}\label{islimit}  For any $m$ by $ n$ matrix $T$, with at least one nonzero entry in every row, the group $L_T$ is a conjugacy limit of the positive diagonal Cartan subgroup. 
\end{lem}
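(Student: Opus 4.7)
The plan is to exhibit an explicit sequence $g_k \in SL_{m+n+1}(\R)$ for which $g_k C g_k^{-1} \to L_T$ in the Chabauty topology. The idea is a two-scale conjugation, with factors $k$ and $k^2$ chosen so that under conjugation the Cartan eigenvalue differences $d_i - d_j$ rescale compatibly with the two patterns visible in $L_T$: the first $m$ rows of the top-right block of each $\rho_T(a,b)$ are scalar multiples of the prescribed row $(T_{p1},\dots,T_{pn})$, while the $(m{+}1)$-st row carries free entries.

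Set $W_1 := \sum_{p=1}^{m}\sum_{j=1}^{n} T_{pj}\,E_{p,\,m+1+j}$ and $W_2 := \sum_{j=1}^{n} E_{m+1,\,m+1+j}$, where $E_{ij}$ denotes the standard matrix unit, and let $g_k := I + kW_1 + k^2 W_2$. Both $W_1$ and $W_2$ are supported in the top-right $(m{+}1)\times n$ block, so every product $W_i W_j$ vanishes; in particular $g_k$ is unipotent upper triangular with inverse $g_k^{-1} = I - kW_1 - k^2 W_2$, hence lies in $SL_{m+n+1}(\R)$. A direct block computation gives, for each diagonal $D = \mathrm{diag}(d_1,\dots,d_{m+n+1}) \in C$, that $g_k D g_k^{-1}$ retains the diagonal $D$, is zero outside the diagonal and the top-right block, and its top-right entries equal $k\, T_{pj}(d_{m+1+j} - d_p)$ in position $(p,m{+}1{+}j)$ for $p \le m$, together with $k^2(d_{m+1+j} - d_{m+1})$ in position $(m{+}1,m{+}1{+}j)$.

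For the forward inclusion $L_T \subseteq \lim g_k C g_k^{-1}$, given $\rho_T(a_1,\dots,a_m,b_1,\dots,b_n) \in L_T$, take $D_k = \mathrm{diag}(e^{h_{k,i}}) \in C$ with $h_{k,p} = -a_p/k$ for $p \le m$, $h_{k,m+1}=0$, and $h_{k,m+1+j} = b_j/k^2$, plus an $O(1/k)$ scalar shift to enforce $\sum_i h_{k,i}=0$. Direct substitution into the formula above shows the diagonal of $g_k D_k g_k^{-1}$ tends to $I$, the $(p, m{+}1{+}j)$ entry converges to $T_{pj} a_p$, and the $(m{+}1, m{+}1{+}j)$ entry converges to $b_j$, so $g_k D_k g_k^{-1} \to \rho_T(a_1,\dots,b_n)$.

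The main obstacle is the reverse inclusion, which is where the hypothesis that every row of $T$ has a nonzero entry is essential. Suppose $y_k := g_k D_k g_k^{-1}$ converges along a subsequence to some $Y \in SL_{m+n+1}(\R)$. The diagonal entries $(y_k)_{ii} = e^{h_{k,i}}$ converge, so the $h_{k,i}$ are bounded; convergence of the $(m{+}1, m{+}1{+}j)$ entry then forces $h_{k,m+1+j} - h_{k,m+1} = O(1/k^2)$, while for each $p \le m$ the row hypothesis supplies some $j_p$ with $T_{p j_p} \neq 0$, and convergence of the $(p, m{+}1{+}j_p)$ entry gives $h_{k,m+1+j_p} - h_{k,p} = O(1/k)$. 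Consequently all $h_{k,i}$ share a common subsequential limit, which is forced to be $0$ by the trace-zero condition $\sum_i h_{k,i}=0$. The top-right limits then read $(Y)_{p,m+1+j} = T_{pj}\, a_p$ with $a_p := \lim_k k(h_{k,m+1}-h_{k,p})$ independent of $j$, since any $j$-dependent correction is absorbed into the $O(1/k)$ error; all other off-diagonal entries of $Y$ vanish by the block structure of $g_k$, so $Y \in L_T$, completing the verification of Chabauty convergence.
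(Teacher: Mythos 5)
Your construction is the same as the paper's: your $g_k = I + kW_1 + k^2W_2$ is exactly the conjugating sequence $P_r$ used there, and your choice of diagonal matrices $D_k$ reproduces the paper's computation showing $L_T \subseteq \lim g_k C g_k^{-1}$ (your sign convention for the conjugated entries, $kT_{pj}(d_{m+1+j}-d_p)$, is actually the correct one; the discrepancy is immaterial since the parameters range over all of $\R$). Where you genuinely diverge is the reverse inclusion. The paper disposes of it in one line by a dimension-and-connectedness argument, citing Proposition 3.1 of the Cooper--Danciger--Wienhard reference: since $C$ and $L_T$ are both connected and isomorphic to $\R^{m+n}$, containment of $L_T$ in the limit forces equality. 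You instead prove it directly: any convergent sequence $g_kD_kg_k^{-1}\to Y$ forces the diagonal to tend to $I$ (using the determinant condition together with the rates $h_{k,m+1+j}-h_{k,m+1}=O(1/k^2)$ and, via the nonzero entry $T_{pj_p}$ in each row, $h_{k,m+1+j_p}-h_{k,p}=O(1/k)$), and then the surviving top-right entries are exactly of the form $T_{pj}a_p$ and $b_j$, so $Y\in L_T$. This is correct and buys two things: it is self-contained, avoiding the external citation, and it makes transparent exactly where the hypothesis that every row of $T$ has a nonzero entry enters (it is needed only for the reverse inclusion, to pin down the rate of $h_{k,p}$). It also lets you avoid the paper's slightly awkward case split on whether the first column of $T$ has zero entries, since your parametrization $h_{k,p}=-a_p/k$ works uniformly. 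The cost is a longer argument; the paper's dimension count is shorter but rests on the imported proposition and on knowing in advance that Chabauty limits of $C$ are $(m+n)$-dimensional connected groups.
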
 

\begin{proof}  Let $C = \textrm{diag}\langle x_1 , ... x_{m+n+1} \rangle \leq SL_{m+n+1}(\R)$, be the positive diagonal Cartan subgroup,  so $x_1 \cdot x_2 \cdot \cdot  \cdot x_{m+n+1}=1$.  Let $\{P_r\}_{r=0}^\infty$ be the sequence of matrices 

$$P_r= \left( 
 \begin{matrix}
1& 0 & \ldots &0&  0 & T_{11}r &  T_{12}r  & \ldots & T_{1n}r\\
0 & 1 & \ldots & 0& 0 & T_{21} r &  T_{22} r& \ldots & T_{2n} r\\
\vdots & \vdots & \ddots &\vdots &  \vdots & \vdots & \vdots & \ddots & \vdots\\
0&0& \ldots &1&  0 & T_{m1}r  &   T_{m2}r     &\ldots & T_{mn}r\\
0& 0& \ldots &0&  1 & r^2 & r^2 & \ldots & r^2\\
0& 0& \ldots  &0& 0& 1 & 0& \ldots &0 \\
0&0& \ldots & 0&0& 0&1 & \ldots &0\\
\vdots & \vdots & \ddots &\vdots &  \vdots & \vdots & \vdots & \ddots & \vdots\\
0&0& \ldots & 0&0& 0&0 & \ldots &1\\
\end{matrix} \right)
  $$

  Conjugating, $P_r C P_r ^{-1} =$
  
  
  
  
  $$
 \bordermatrix{
 & & &  & & & \textrm{column } m+2 & \cr
&x_1& 0 & \ldots &0&  0 & T_{11}r (x_{1}- x_{m+2})&  T_{12}r (x_{1}- x_{m+3}) & \ldots & T_{1n}r (x_1 - x_{m+n+1}) \cr
&0 & x_2 & \ldots & 0& 0 &T_{21} r  (x_{2}- x_{m+2})&  T_{22} r (x_{2}- x_{m+3}) & \ldots & T_{2n} r  (x_2 - x_{m+n+1}) \cr
&\vdots & \vdots & \ddots &\vdots &  \vdots & \vdots & \vdots & \ddots & \vdots \cr
&0&0& \ldots &x_m &  0 & T_{m1}r(x_{m}- x_{m+2})   &   T_{m2} r  (x_{m}- x_{m+3})    &\ldots & T_{mn} r  (x_{m} - x_{m+n+1}) \cr
 \shortstack{row\\m+1}  &0& 0& \ldots &0&  x_{m+1} & r^2(x_{m+1}- x_{m+2}) & r^2 (x_{m+1}- x_{m+3}) & \ldots & r^2  (x_{m+1} - x_{m+n+1}) \cr
&0& 0& \ldots  &0& 0& x_{m+2} & 0& \ldots &0 \cr
&0&0& \ldots & 0&0& 0&x_{m+3} & \ldots &0 \cr
&\vdots & \vdots & \ddots &\vdots &  \vdots & \vdots & \vdots & \ddots & \vdots \cr
&0&0& \ldots & 0&0& 0&0 & \ldots &x_{m+n+1} } 
  $$

Assume for simplicity that all entries in the first column of $T$ are non-zero. Given an element $l_T \in L_T$, we will find a sequence of elements in $ P_r C P_r ^{-1}$ which converges to $l_T$. 
Then the definition of convergence implies that 
$L_T$ is a subgroup of the limit of $P_r C P_r ^{-1}$. 

Given $x_{m+1}$ for $1\le i\le n$ define
$$x_{m+1+i} = -r^{-2} b_i + x_{m+1}.$$
This ensures row $m+1$ of $l_T$ and  of $P_r C P_r ^{-1}$ are equal since 
\begin{equation}\label{rowm1}r^2 (x_{m+1} - x_{m+1+i})= b_i. \end{equation} 
For $i\le m$ define $x_i$ in terms of $x_{m+1}$ by
$$x_i = r^{-1} a_i - r^{-2} b_1 + x_{m+1}.$$
It follows  that  column $m+2$ of $l_T$ and of $P_r C P_r ^{-1}$ are equal because 
\begin{equation}\label{col} x_{i}- x_{m+2}=(r^{-1} a_i - r^{-2} b_1 + x_{m+1})-(-r^{-2}b_1+x_{m+1})=r^{-1}a_i.\end{equation}
The determinant condition $x_1  \cdot \cdot \cdot x_{m+n+1}= 1$ determines $x_{m+1}$.
Observe that $x_i\to  x_{m+1}$ as $r\to\infty$, so the determinant is approximately $( x_{m+1})^{m+n+1}$. Thus
every $x_i\to 1$ as $r \to \infty$.

We have now determined $x_i$ for $1 \leq i \leq m+n+1$.  It remains to show convergence in the remainder of the entries.  Using equation \eqref{rowm1} since $r\to\infty$,     
$$r (x_{m+1} - x_{m+1+i})   \to 0.$$ 
By taking the difference of any two of these terms, $$r(x_{m+1+j}- x_{m+1+k}) \to 0, $$
and, in particular \begin{equation}\label{convgen}
 r(x_{m+2} - x_{m+1+k}) \to 0.
\end{equation}  
Consider the $( j, m+1+k)$ entry, for $1 \leq j,k \leq n$.  Using \eqref{col} and \eqref{convgen}, implies $$T_{jk}r (x_j - x_{m+k+1})= T_{jk}r (x_j - x_{m+2}) - T_{jk} r(x_{m+2} - x_{m+1+k}) \to T_{jk } a_j - T_{jk}0 = T_{jk}a_j .$$  

This completes the proof when the entries in the first column of $T$ are non-zero.  Suppose some entries in the first column of $T$ are zero.  By hypothesis, $T$ has a nonzero entry in every row, say $T_{jk}$. Pick $x_i$ for $1 \leq i \leq m$ so that $T_{jk}m (x_j - x_{m+1+k}) \to a_j T_{jk}$.  Since $T_{jk} \neq 0$, proceed as in the rest of the proof. Thus we have found a sequence  $\textrm{diag}\langle x_1 , ... x_{m+n+1} \rangle$ such that  $P_r C P_r ^{-1} \to l_T$. 

This shows $L_T$ is contained in the limit of $P_r C P_r^{-1}$. For dimension reasons (\cite{CDW} Proposition 3.1), and since $C$ and $L_T$ are connected and isomorphic to $\R ^{m+n}$ (see \cite{IM} Proposition 1), then $P_r C P_r ^{-1} \to L_T$.
\end{proof} 


\end{section}
  
  \begin{section}{A Continuum of  Conjugacy Classes of Limit Groups in $SL_7(\R)$}


In this section we find some conjugacy invariants of the group $L_T$ and use them to produce a family of conjugacy classes
of dimension at least
$(n^2-8n+12)/8$ when $n \geq 7$. We first illustrate this when $n=7$. 

A subgroup $G \leq SL_n(\R)$ acts on $\R P^{n-1}$.  The orbit of a point, $x \in \R P^{n-1}$ is $G.x = \{ g.x :  g \in G  \}$.  Denote by $\overline{G.x}$ the \emph{orbit closure} of $x$.  

\begin{lem}\label{conjorb} Suppose $G, H \leq SL_n(\R)$ 
and $Q \in SL_n(\R)$ so that $G = Q H Q^{-1}$. 
Then $[Q]$ is a projective transformation taking the orbit closures of $G$ to the orbit closures of $H$. 
\end{lem}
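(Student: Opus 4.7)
The plan is to verify the statement by two essentially routine steps: (i) show that conjugation translates into an equivariance between the two group actions on $\R P^{n-1}$, so that $[Q]$ carries orbits to orbits, and (ii) use the fact that $[Q]$ is a homeomorphism of $\R P^{n-1}$ to conclude that it carries orbit closures to orbit closures. There is no substantive obstacle; the content is just organizing the definitions.

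First I would unpack the setup. Since $Q\in SL_n(\R)$, the induced map $[Q]:\R P^{n-1}\to\R P^{n-1}$ is a projective transformation, hence a homeomorphism with inverse $[Q^{-1}]$. For a point $x\in\R P^{n-1}$, the relation $G=QHQ^{-1}$ gives the equivariance
\[
G.([Q]x) \;=\; (QHQ^{-1}).(Q.x) \;=\; Q.(H.x) \;=\; [Q]\bigl(H.x\bigr),
\]
so $[Q]$ carries the $H$-orbit of $x$ to the $G$-orbit of $[Q]x$. Applying $[Q^{-1}]$ shows this is a bijection between $H$-orbits and $G$-orbits on $\R P^{n-1}$.

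Second, I would pass to closures. A projective transformation is a homeomorphism, so $[Q]\bigl(\overline{H.x}\bigr)=\overline{[Q](H.x)}=\overline{G.([Q]x)}$. Thus $[Q]$ sends each orbit closure of $H$ to an orbit closure of $G$, and $[Q^{-1}]$ provides a two-sided inverse on the level of orbit closures, giving the claimed bijective correspondence. (The statement in the lemma is phrased in the reverse direction, but the argument is symmetric since $H=Q^{-1}GQ$ as well.)

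The only care needed is to note that the orbit closure construction is preserved under homeomorphisms, which is immediate. So the proof will be essentially three lines: the equivariance identity above, continuity of $[Q]$, and the observation that $[Q^{-1}]$ furnishes the inverse map. No quantitative or structural difficulty arises.
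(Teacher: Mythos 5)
Your proposal is correct and follows the same route as the paper's proof, which simply observes that conjugation sends orbits to orbits and that closures are preserved; you merely write out the equivariance identity and the homeomorphism argument that the paper leaves implicit. No differences of substance.
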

\begin{proof}  
Since $Q$ conjugates $G$ to $H$, then $Q$ takes the orbits of $G$ to the orbits of $H$.  Hence $Q$ takes orbit closures of $G$ to orbit closures of $H$. 
   \end{proof}

Given $G \leq SL_{n} (\R)$.  The \emph{orbit dimension function}, $\mathcal{R}_G: \R P^{n-1} \to \N$, is $\mathcal{R}_G(x)= \textrm{dim}(\overline{G.x})$.   As a corollary of Lemma \ref{conjorb},
$\mathcal{R}_G(Q(x)) = \mathcal{R}_{QGQ^{-1}} (x)$ for all $x \in \R P^{n-1}$.  

Next we define some conjugacy invariants of the action of a group on $\R P^{n-1}$.  To do this we need
 an invariant, the \emph{unordered generalized cross ratio},
  of a collection of points in general position in projective space, which generalizes the cross ratio of 4 points on a projective line. 
  This invariant is a finite subset of a product of projective spaces.
   Let $\mathcal{P}(S)$ denote the power set of $S$.

Let $\{e_1, ..., e_n \}$ be the \emph {standard basis} in $\R ^n$.  The \emph{standard projective basis} in $\R P^{n-1}$ is $\{ [e_1], ..., [e_{n}], [e_1 + \cdot \cdot \cdot +e_n]\}$, and
 an \emph{augmented basis} in $\R P^{n-1}$ is a set of $m \geq n+2$ points in general position, which means every subset of $(n+1)$ points is a projective basis.

\begin{defn} 
\begin{enumerate} 
\item The \emph{ordered generalized cross ratio} is the function, $\mathcal{C}: ( \R P^{n-1} ) ^{m} \to( \R P^{n-1})^{m-(n+1)}$ defined as follows.  Given any (ordered) augmented basis $(y_1, y_2, ..., y_{m})$ in $\R P^{n-1}$,  there is unique projective transformation, $Q$,  which maps $ (y_1, ..., y_{n+1}) \mapsto ([e_1], ...,[ e_{n}],[e_1 + \cdot \cdot \cdot +e_n]) $.  Define $\mathcal{C}( y_1, y_2, ..., y_{m}) := ( Q( y_{n+2}), Q(y_{n+3}), ..., Q(y_m) )$. 
\item Given an (unordered) augmented basis in $\R P^{n-1}$,  the \emph{unordered generalized cross ratio},  $\mathcal{UC} :(\R P^{n-1} )^m \to \mathcal{P}(( \R P^{n-1})^{m-(n+1)})  $ 
is the set of all generalized cross ratio tuples, $\mathcal{UC}(y_1, ..., y_{m}) :=\{\mathcal{C} ( y_{\sigma(1)}, ..., y_{\sigma(m)} )  : \sigma \in S_{m} \}$. 
\end{enumerate} 
\end{defn} 
For example, if $\mathcal{A} = \{ [ 1:0]: [1:1], [1:2], [1:\alpha] \}\subset \R P^1$, then 
$$\mathcal{UC}(\mathcal{A} ) = \big \{ \frac{2(\alpha-1)}{\alpha}, \frac{\alpha}{2(\alpha-1)}, \frac{\alpha}{2-\alpha}, \frac{2-\alpha}{\alpha}, \frac{2(\alpha -1)}{\alpha-2} , \frac{\alpha-2}{2(\alpha -1)}  \big \}\subset \R P^1.$$
Thus $\mathcal{UC}(\mathcal{A})$ is the set of all possible cross ratios of the points in $\mathcal{A}$.  

\begin{prop}\label{CR} Let  $\{ y_1, ..., y_{m} \} $ and $ \{ x_1, ..., x_{m} \}$  be unordered augmented bases in $\R P^{n-1}$, so $m \geq n+2$.  Then  $\mathcal{UC}(y_1, ..., y_{m})  = \mathcal{UC}(x_1, ..., x_{m})$, if and only if there is a projective transformation, $Q: \R P^{n-1} \to  \R P^{n-1}$, such that  $Q( \{ y_1, ..., y_{m} \}) = \{ x_1, ..., x_{m} \}$. 
\end{prop}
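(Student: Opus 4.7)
My plan is to reduce both directions to an ``ordered'' version of the statement: two ordered augmented bases $(y_1,\ldots,y_m)$ and $(x_1,\ldots,x_m)$ satisfy $\mathcal{C}(y_1,\ldots,y_m)=\mathcal{C}(x_1,\ldots,x_m)$ if and only if there is a projective transformation $Q$ with $Q(y_i)=x_i$ for every $i$. This ordered version is the heart of the argument, and once it is in hand the unordered version amounts to bookkeeping permutations.

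For the ordered version, I will let $Q_y$ and $Q_x$ denote the unique projective transformations sending $(y_1,\ldots,y_{n+1})$ and $(x_1,\ldots,x_{n+1})$ to the standard projective basis; these exist and are unique because the first $n+1$ points of an augmented basis form a projective basis. If the ordered cross ratios agree, then $Q_y(y_i)=Q_x(x_i)$ for $i\ge n+2$ by definition of $\mathcal{C}$, and the same identity holds for $i\le n+1$ since both sides equal the $i$-th standard projective basis point. Hence $Q:=Q_x^{-1}Q_y$ is a projective transformation sending $y_i\mapsto x_i$ for all $i$. Conversely, if such a $Q$ exists, then $Q_yQ^{-1}$ sends the projective basis $(x_1,\ldots,x_{n+1})$ to the standard projective basis, so by uniqueness $Q_yQ^{-1}=Q_x$; thus $Q_x(x_i)=Q_y(y_i)$ for every $i\ge n+2$, giving equality of ordered cross ratios.

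The ``if'' direction of the proposition now follows: given $Q$ with $Q(\{y_i\})=\{x_i\}$, let $\tau\in S_m$ be the permutation determined by $Q(y_i)=x_{\tau(i)}$; for any $\sigma\in S_m$, the ordered version applied to $(y_{\sigma(1)},\ldots,y_{\sigma(m)})$ and $(x_{\tau\sigma(1)},\ldots,x_{\tau\sigma(m)})$ shows these ordered tuples share a cross ratio, hence $\mathcal{UC}(y_1,\ldots,y_m)\subseteq\mathcal{UC}(x_1,\ldots,x_m)$, and symmetry using $Q^{-1}$ yields equality. For the ``only if'' direction, choose any ordered representative $v=\mathcal{C}(y_1,\ldots,y_m)\in\mathcal{UC}(y_1,\ldots,y_m)=\mathcal{UC}(x_1,\ldots,x_m)$; by the definition of $\mathcal{UC}$ there is some $\sigma\in S_m$ with $v=\mathcal{C}(x_{\sigma(1)},\ldots,x_{\sigma(m)})$, and then the ordered version supplies a projective $Q$ with $Q(y_i)=x_{\sigma(i)}$ for every $i$, so $Q(\{y_i\})=\{x_i\}$ as sets.

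The only point requiring care is verifying that the map $Q=Q_x^{-1}Q_y$ constructed in the ordered version is automatically a projective transformation and not merely a set bijection, but this is immediate since both $Q_y$ and $Q_x$ are projective. The general position hypothesis is used only to guarantee that $Q_y$ and $Q_x$ are well-defined and unique, and to ensure no degeneracies disrupt the permutation argument. I do not foresee a genuine obstacle; the proposition is essentially a repackaging of the uniqueness of a projective transformation on a projective basis.
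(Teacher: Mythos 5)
Your proposal is correct and follows essentially the same route as the paper: both arguments rest on the uniqueness of the projective transformation sending a projective basis to the standard one, composing the two normalizing maps $Q_x^{-1}Q_y$ in one direction and invoking uniqueness of $Q_\sigma Q_0^{-1}$ in the other. The only difference is organizational — you isolate the ordered statement as an explicit lemma before handling the permutations, which the paper does implicitly.
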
 
\begin{proof}  
First, suppose $\mathcal{UC}(y_1, ..., y_{m}) = \mathcal{UC}(x_1, ..., x_{m}) $.   For the generalized cross ratio tuple coming from the identity permutation, $\mathcal{C}(x_1, ...., x_{m}) \in \mathcal{UC} (x_1, ..., x_m)$, there is some reordering, $\sigma \in S_{m}$, such that  $\mathcal{C}(x_1, ...., x_{m}) = (z_1, ..., z_{m-n-1}) = \mathcal{C}( y_{\sigma(1)}, ..., y_{\sigma(m)}) \in \mathcal{UC} (y_1, ..., y_m)$.   That is, there exist projective transformations $Q_1 , Q_2 : \R P^{n-1} \to \R P^{n-1}$ such that $ Q_1 ( (x_1, ...., x_{n+1}))  = ([e_1], ...,[ e_{n}],[e_1 + \cdot \cdot \cdot +e_n])$ and $Q_2 (( y_{\sigma(1)}, ..., y_{\sigma(n+1)}) )= ([e_1], ...,[ e_{n}],[e_1 + \cdot \cdot \cdot +e_n])$, and also $Q_1 (x_{n+1+i}) =z_i = Q_2 (y_{\sigma(n+1+i)})$, for $1 \leq i \leq m-(n+1)$.  Set $Q : = Q_2 ^{-1} Q_1$, so $Q$ is a projective transformation such that $Q( (x_1, ...., x_{m}) )= 
( y_{\sigma(1)}, ..., y_{\sigma(m)})$.

Conversely, suppose there exists a projective transformation $Q_0: \R P^{n-1} \to \R P^{n-1}$ such that \\
$Q_0( \{x_1, ...., x_{m}\}) =  \{ y_{1}, ..., y_{m}\}$.  Recall $\mathcal{UC}(x_1,...x_{m}) = \{ \mathcal{C} ( x_{\sigma(1)}, ..., x_{\sigma(m)} )  : \sigma \in S_{m} \}$.  Set $Q_\sigma : \R P^{n-1} \to \R P^{n-1}$ to be the unique projective transformation such that $Q_\sigma(  (x_{\sigma(1)}, ..., x_{\sigma(n+1)})) = ([e_1], ...,[ e_{n}],[e_1 + \cdot \cdot \cdot +e_n])$.  Then $\mathcal{UC} (x_1,...x_{m}) = \{ Q_\sigma (x_{\sigma(m)} ): \sigma \in S_{m}\}$.  Since $Q_\sigma Q_0^{-1} ( (y_{\sigma(1)}, ..., y_{\sigma(n+1)})) = ([e_1], ...,[ e_{n}],[e_1 + \cdot \cdot \cdot +e_n])$, and  such a projective transformation is unique, so $\mathcal{UC} (y_1,...y_{m}) = \{ Q_\sigma Q_0^{-1} (y_{\sigma(m)} ): \sigma \in S_{m}\} =  \mathcal{UC}(x_1, ..., x_{m}) $.
\end{proof} 

Proposition \ref{CR} shows that unordered cross ratio of an unordered augmented basis is a complete projective invariant. 
As a warm-up, we show $Red(7)$ contains a subspace homeomorphic to an interval.   

 \begin{defn}  
Let $\alpha \in \R-\{0,1,2\}$ be fixed, and let  $\rho_{\alpha} : \R ^6 \to SL_7(\R)$ be the homomorphism defined by 
$$\rho_\alpha(  a,b,c,d,s,t) =
 \left( \begin{array}{ccccccc}
1 & 0 & 0 &0 &0  & a & 0 \\
0 & 1 & 0 &0 &0 & b & b \\
0&0&1&0&0& c & 2c \\
0&0&0&1&0& d & \alpha d \\
0&0&0&0&1 & e & f\\
0 &0&0&0&0&1&0\\
0&0&0&0&0&0&1
 \end{array} \right). 
$$
 The image of $\rho_{\alpha}$ is a group,  $L_{\alpha} \leq SL_7(\R)$. 

\end{defn}

An application of Lemma \ref{islimit} shows that $L_\alpha$ is a conjugacy limit group.  We use the unordered generalized cross ratio to distinguish conjugacy classes of limit groups. 

\begin{prop} Given $\alpha, \beta \in \R$, then $L_\alpha$ is conjugate to $L_\beta$ if and only if \\
 $\beta \in  \big \{ \frac{2(\alpha-1)}{\alpha}, \frac{\alpha}{2(\alpha-1)}, \frac{\alpha}{2-\alpha}, \frac{2-\alpha}{\alpha}, \frac{2(\alpha -1)}{\alpha-2} , \frac{\alpha-2}{2(\alpha -1)}  \big \}$.
\end{prop}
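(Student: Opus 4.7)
The plan is to extract a projective invariant from the conjugacy class of $L_\alpha$ via its orbit structure on $\R P^6$, and then reduce to Proposition \ref{CR}. First I would analyze the $L_\alpha$-action: the fixed point set is the $\R P^4$ given by $v_6 = v_7 = 0$, and for $[v]$ with $(v_6, v_7) \neq (0,0)$ the tangent map of the orbit at $[v]$ sends $(a,b,c,d,e,f)$ to $(v_6 a,\, (v_6+v_7)b,\, (v_6+2v_7)c,\, (v_6+\alpha v_7)d,\, v_6 e + v_7 f,\, 0,\, 0)$, whose rank equals one plus the number of nonzero entries in $v_6,\, v_6+v_7,\, v_6+2v_7,\, v_6+\alpha v_7$. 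Hence the orbit dimension is generically $5$, dropping to $4$ precisely when $[v_6:v_7] \in \{[0:1],[-1:1],[-2:1],[-\alpha:1]\}$. This defines four hyperplanes $H_0, H_1, H_2, H_\alpha \subset \R P^6$ each containing the fixed $\R P^4$, and projecting them away from the fixed plane identifies them with a 4-element subset of $\R P^1$ projectively equivalent to $\{[1:0],[1:1],[1:2],[1:\alpha]\}$.

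For the forward direction, suppose $L_\alpha = Q L_\beta Q^{-1}$. By Lemma \ref{conjorb}, $[Q]$ sends orbit closures of $L_\beta$ to those of $L_\alpha$, preserves the fixed point set, and matches orbit dimensions. Thus $[Q]$ carries the singular 4-tuple of $L_\beta$ to that of $L_\alpha$, inducing a projective transformation of $\R P^1$ sending $\{[1:0],[1:1],[1:2],[1:\beta]\}$ to $\{[1:0],[1:1],[1:2],[1:\alpha]\}$ as unordered sets. By Proposition \ref{CR} the two unordered bases have equal unordered cross ratio, and comparison with the explicit computation of $\mathcal{UC}(\{[1:0],[1:1],[1:2],[1:\alpha]\})$ preceding Proposition \ref{CR} forces $\beta$ to lie in the stated 6-element set.

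For the converse, suppose $\beta$ is in that set. By Proposition \ref{CR} there exist a permutation $\sigma \in S_4$ and a projective transformation $S \in GL_2(\R)$ such that right multiplication by $S^{-1}$ sends the $\sigma$-reordered tuple of row lines $([1:0],[1:1],[1:2],[1:\alpha])$ to $([1:0],[1:1],[1:2],[1:\beta])$. Let $R$ be the $4\times 4$ permutation matrix realizing $\sigma$ on rows $1$ through $4$. A block computation shows that conjugation by the block-diagonal matrix $\textrm{diag}(R, 1, S) \in GL_7(\R)$ transforms the top-right $4 \times 2$ block of any element of $L_\alpha$ via $B \mapsto R B S^{-1}$, which by the choice of $R$ and $S$ sends $L_\alpha$ onto $L_\beta$; rescaling by a seventh root to descend to $SL_7(\R)$ does not affect the conjugation.

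The main obstacle I expect is bookkeeping in the converse: for each of the six values of $\beta$ one must exhibit a compatible pair $(\sigma, S)$, and verify that the unrestricted row $5$ constraint is preserved and that the determinant rescaling causes no issue. The forward direction, by contrast, is a conceptually clean application of the orbit-dimension computation combined with Lemma \ref{conjorb} and Proposition \ref{CR}.
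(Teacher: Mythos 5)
Your proof follows the paper's argument essentially step for step: you compute the orbit dimension function of $L_\alpha$ on $\R P^6$, identify the four exceptional hyperplanes lying over four points of $\PR\langle e_6,e_7\rangle\cong\R P^1$, and combine Lemma \ref{conjorb} with Proposition \ref{CR} to make the unordered cross ratio of that configuration a complete conjugacy invariant. You are in fact more complete than the paper on the converse: the paper stops at ``therefore $L_\alpha$ is conjugate to $L_\beta$ if and only if $\beta\in\mathcal{UC}(\mathcal{A})$'' without exhibiting a conjugating element, whereas your block matrix $\mathrm{diag}(R,1,S)$ makes that direction explicit (the same device appears later, in the well-definedness half of Proposition \ref{infinitelymany}).

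One step deserves more care, and it is the same step the paper elides. What the orbit-closure analysis plus Proposition \ref{CR} actually yields is: $L_\alpha\sim L_\beta$ if and only if the configurations $\mathcal{A}_\alpha=\{[1:0],[1:1],[1:2],[1:\alpha]\}$ and $\mathcal{A}_\beta=\{[1:0],[1:1],[1:2],[1:\beta]\}$ are projectively equivalent, i.e.\ $\mathcal{UC}(\mathcal{A}_\alpha)=\mathcal{UC}(\mathcal{A}_\beta)$. Your closing sentence of the forward direction asserts that this ``forces $\beta$ to lie in the stated $6$-element set,'' but equality of the two cross-ratio sets is equivalent to the \emph{normalized cross ratio} of the $\beta$-configuration, namely $2(\beta-1)/\beta$, lying in $\mathcal{UC}(\mathcal{A}_\alpha)$ --- not to $\beta$ itself lying there. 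These are genuinely different conditions: $L_\alpha$ is certainly conjugate to itself, yet for generic $\alpha$ the number $\alpha$ does not belong to the displayed set (for $\alpha=3$ the set is $\{4/3,\,3/4,\,-3,\,-1/3,\,4,\,1/4\}$, which does not contain $3$; meanwhile $\beta=8/5$ does give a configuration projectively equivalent to $\mathcal{A}_3$, via the map fixing $2$ and swapping $0$ with $1$, yet $8/5$ is not in that set either). So the statement your geometric argument actually proves is ``$L_\alpha\sim L_\beta$ iff $2(\beta-1)/\beta\in\mathcal{UC}(\mathcal{A}_\alpha)$,'' and passing to the set displayed in the proposition requires the further Möbius substitution $\beta\mapsto 2(\beta-1)/\beta$. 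This is a defect shared with the paper's own proof (and arguably with the statement itself) rather than a flaw in your orbit analysis or your converse construction, but as written your final ``comparison'' step does not follow.
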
 

\begin{proof} 
We showed in Lemma \ref{conjorb} that if two groups are conjugate, there is a projective transformation taking the orbit closures of the first group to the orbit closures of the second.  The group $L_{\alpha}$ partitions $\R P^6$ into orbit closures, and we will use the cross ratio to give an invariant of such a partition. 

 Let $\{e_1, ... e_7\}$ be the standard basis for $\R ^7 := V$. Let $ U= \langle e_1,..,e_5 \rangle$, and $W= \langle e_6, e_7\rangle $.  Then $V = U \oplus W$, and denote the quotient map $q : V \to V/U \cong W$.   Given $[t e_6 + e_7 ]\in \PR(W)$, define the 5-dimensional projective subspace $\mathcal{H}_t := \PR \langle e_1,... , e_5,  t e_6 + e_7 \rangle=\PR \langle q^{-1}(t e_6 +e_7) \rangle$. 
 We  show the orbit closure of a typical point $x \in \R P^6$ is some $\mathcal{H}_t$, but there are 4 exceptional $\mathcal{H}_t$, which are the pre-images of 4 points in $\PR(W)$.  The unordered  cross ratio gives an invariant of these  points in $\PR(W) \cong \R P^1$.
 
For convenience, denote the orbit dimension function for $L_\alpha$ by $\mathcal{R}_{\alpha} := \mathcal{R}_{L_{\alpha}}$.  Let $x = [x_1 : \cdot \cdot \cdot :x_7] \in \R P^6$.  The action of $L_\alpha$ is given by $L_\alpha . x=$
\begin{equation}\label{Lalph}
  [x_1 + ax_6: x_2+ b(x_6 + x_7): x_3 + c(x_6 + 2x_7): x_4+d(x_6 + \alpha x_7): x_5+ e x_6 + fx_7 :x_6:x_7].
\end{equation}

If $x \in \PR(U)$, then $\mathcal{R}_{\alpha}(x) = 0$, since $\PR (U) = \textrm{Fix} (L_{\alpha})$.  By \eqref{Lalph}, if $x \in \PR(V - U )$, then $ \mathcal{R}_{\alpha}(x) =5$, unless one or more of the coefficients on $a,b,c,d$ are zero, i.e., $x$ satisfies one of the equations 
\begin{equation}\label{sl7}
x_6 =0, \qquad x_6 + x_7 =0, \qquad x_6 + 2 x_7 =0, \qquad x_6 + \alpha x_7 =0.  
\end{equation}

Since $x \in V - U$, at least one of $x_6, x_7$ is not zero, and $x$ satisfies at most one equation in \eqref{sl7}.  Consequently, 
$$\mathcal{R}_\alpha(x) = 
  \begin{cases}
   0 & \text{if } x \in \PR(U)  \\
   4      & \text{if } x  \in \mathcal{H}_t  \textrm { and }  t \in \{ 0,1,2,\alpha\}\\
   5      & \text{if } x  \in \mathcal{H}_t  \textrm { and }  t \not \in \{ 0,1,2,\alpha\}.
  \end{cases}$$


Then $\mathcal{A} := \{ [1:t] \in \R P^1: t=0,1,2,\alpha \}$, is an augmented basis in $\R P^1$, and   
$$\mathcal{UC}(\mathcal{A} ) = \big \{ \frac{2(\alpha-1)}{\alpha}, \frac{\alpha}{2(\alpha-1)}, \frac{\alpha}{2-\alpha}, \frac{2-\alpha}{\alpha}, \frac{2(\alpha -1)}{\alpha-2} , \frac{\alpha-2}{2(\alpha -1)}  \big \}\subset \R P^1.$$

 Therefore $L_\alpha$ is conjugate to $L_\beta$ if and only if $\beta \in \mathcal{UC}(\mathcal{A})$.
 \end{proof} 
    
We have shown the map $\R \to \widehat{Red}(7)$ given by $\alpha \to L_\alpha$ is at most 6 to 1.  Therefore $Red(7)$ contains a continuum of non-conjugate limits.  
 
  Recall the \emph{covering dimension} of a topological space, $X$, is smallest number, $n$, such that any open cover has a refinement in which no point is included in more than $n+1$ sets in the open cover.  (See \cite{Lurie}).  Denote the covering dimension of $X$ by $\dim X$. 
Covering dimension is a topological invariant.  We will show later that $\dim Red(7) \geq 1$. 

\end{section}

\begin{section}{The General Case: Bounds for $\dim Red(n)$}


In this section, we exploit the unordered generalized cross ratio to obtain bounds on $\dim Red(n)$ for $n \geq 7$.

 \begin{defn} Let $G \leq SL_n(\R)$ and $x \in \R P^{n-1}$.  Let $\mathcal{H} $ be a projective subspace of $\R P^{n-1}$. 
   \begin{enumerate} 
      \item  Set $M_G :=\textrm{max }\{ \mathcal{R}_G(x): x \in \R P^{n-1} \}.$
   \item  We call $x$ \emph{typical} if $\mathcal{R}_G(x) = M_G$. The subspace $\mathcal{H}$ is \emph{typical} if $\mathcal{H}$ is the orbit closure of a typical point. 
   \item   We say $x$ is \emph{exceptional} if $0< \mathcal{R}_G (x) < M_G$. The subspace $\mathcal{H}$ is \emph{exceptional} if $\mathcal{H}$ is the union of orbit closures of exceptional points, and $\textrm{dim } \mathcal{H} =M_G$. 
   \end{enumerate}
   \end{defn}
   
  Thus there are three types of points: fixed points with $\mathcal{R}_G (x) =0$, exceptional points when $0< \mathcal{R}_G (x) < M_G$, and typical points where $\mathcal{R}_G (x) = M_G$.  
  In our previous example, $M_{L_\alpha} = 5$ is the dimension of a typical subspace, and $\mathcal{H}_t$ is the orbit closure of a typical point.  There are 4 exceptional subspaces $\{ \mathcal{H}_t : t = 0,1,2, \alpha \}$ that break into orbit closures of smaller dimension.  Next we  generalize this example.

   \begin{defn}
An $m$ by $n$ matrix, $T$, is \emph{generic} if all collections of $n$ row vectors of $T$ are linearly independent. Set $\widehat{\mathcal{T} }: = \{ T \in \textrm{Mat}_{m \times n} : T \textrm{ is generic} \}$.   When $m \geq n+2$,  the rows of a generic matrix, $T$, determine an augmented basis,  $\tilde{T} \subset \R P^{n-1}$.  Define an equivalence relation on $\widehat{\mathcal{T}}$ by $T \sim S$ if $\mathcal{UC}(\tilde{T})=\mathcal{UC}(\tilde{S})$. Define $\mathcal{T} := \widehat{\mathcal{T} }/ \sim$, and denote by $[T]_{\mathcal{T}} \in \mathcal{T}$ the equivalence class of $T$. 
\end{defn} 

We give $\mathcal{T}$ a topology as follows. Take the subspace topology on $\widehat{\mathcal{T}}\subset \R ^{m \times n}$, then $\mathcal{T}$ has the quotient topology.  Since $\widehat{\mathcal{T}}$ is an open subset of $\R ^{n \times m}$, it follows $\dim \widehat{\mathcal{T}} = nm$. 

\begin{prop}\label{dimT} $\dim \mathcal{T} = nm - n^2 -m +1$. 
\end{prop}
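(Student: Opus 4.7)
The plan is to identify $\sim$ with the orbit equivalence of a Lie group action on $\widehat{\mathcal{T}}$, and then compute $\dim \mathcal{T}$ by orbit-stabilizer. By Proposition \ref{CR}, $T \sim S$ iff there exists $[Q] \in PGL_n(\R)$ with $Q(\tilde T) = \tilde S$ as unordered sets. Lifting this to the matrix level, $T \sim S$ iff there exist $Q \in GL_n(\R)$, $\sigma \in S_m$, and nonzero scalars $\lambda_1,\dots,\lambda_m$ such that $S = D_\lambda P_\sigma T Q^T$, where $D_\lambda = \mathrm{diag}(\lambda_1,\dots,\lambda_m)$ encodes the freedom to rescale each row (since the rows represent projective points) and $P_\sigma$ is the permutation matrix encoding reordering. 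Thus $\mathcal{T}$ is the orbit space of the Lie group $G := GL_n(\R) \times ((\R^*)^m \rtimes S_m)$ acting on $\widehat{\mathcal{T}}$.

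Next I would compute the dimension of the effective group. The subgroup generated by $(cI_n, c^{-1} I_m, e)$ for $c\in \R^*$ lies in the kernel of the action, because rescaling $Q$ by $c$ cancels against rescaling every $\lambda_i$ by $c^{-1}$. Modulo this one-parameter subgroup, the effective continuous part of $G$ has dimension $n^2 + m - 1$. I would then show the stabilizer of a generic $T$ is finite: if $(Q, D_\lambda, P_\sigma)$ fixes $T$, then $[Q]\in PGL_n(\R)$ permutes the augmented basis $\tilde T$; but a projective transformation of $\R P^{n-1}$ is uniquely determined by its action on any $n+1$ points in general position, so there are at most $m!/(m-n-1)!$ such $Q$, giving a $0$-dimensional stabilizer.

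By the orbit-stabilizer principle, each generic $G$-orbit in $\widehat{\mathcal{T}}$ is a smooth submanifold of dimension $n^2 + m - 1$, and on a dense open subset the quotient $\mathcal{T}$ is locally a smooth manifold of dimension $mn - (n^2+m-1) = mn - n^2 - m + 1$.

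The main obstacle is that $\mathcal{T}$ is not Hausdorff in general, so I must verify that the covering dimension of $\mathcal{T}$ coincides with this naive orbit-space count. I would resolve this by constructing an explicit local slice on the open dense subset of $\widehat{\mathcal{T}}$ where the first $n+1$ rows already form a projective basis: the $GL_n(\R) \times (\R^*)^{n+1}$-action uniquely normalizes these rows to the standard projective basis $([e_1],\dots,[e_n],[e_1+\cdots+e_n])$, and the remaining $m-n-1$ rows descend to free points of $\R P^{n-1}$, each contributing $n-1$ parameters. This yields a slice of dimension $(m-n-1)(n-1) = mn - n^2 - m + 1$, onto which $\mathcal{T}$ retracts modulo the finite residual $S_m$-symmetry; since covering dimension is a topological invariant unaffected by a finite-to-one quotient, the formula $\dim\mathcal{T} = nm - n^2 - m + 1$ follows.
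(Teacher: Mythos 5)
Your argument is correct, and it reaches the same count as the paper but by a different primary route. The paper's proof is a one-step parametrization: it identifies $\mathcal{T}$ with the image of $\mathcal{UC}\circ\Phi$, observes that this image is (essentially) the open set of configurations in general position inside $(\R P^{n-1})^{m-(n+1)}$, and reads off $\dim\mathcal{T}=(n-1)(m-n-1)$ directly. You instead make the group acting on $\widehat{\mathcal{T}}$ explicit --- $GL_n(\R)\times((\R^*)^m\rtimes S_m)$ acting by $T\mapsto D_\lambda P_\sigma T Q^t$ --- verify via Proposition \ref{CR} that $\sim$ is exactly orbit equivalence, and obtain the formula as $\dim\widehat{\mathcal{T}}$ minus the generic orbit dimension $n^2+m-1$. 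Your closing slice construction (normalize the first $n+1$ rows to the standard projective basis and record the remaining $m-n-1$ points) is in substance identical to the paper's cross-ratio parametrization, and indeed reappears in the paper as the set $X$ in Theorem \ref{homeo}; so the two proofs converge at the decisive step. What your version buys is rigor on two points the paper elides: first, that the naive orbit-space count really computes covering dimension (via the explicit slice rather than a bare appeal to the openness of the image); second, that $\mathcal{UC}$ is only well defined up to the finite $S_m$-ambiguity, so the quotient is finite-to-one over the slice --- you correctly note that finite quotients preserve covering dimension. One small remark: your worry about non-Hausdorffness is unnecessary here, since your own slice argument exhibits $\mathcal{T}$ as the quotient of an open subset of $(\R P^{n-1})^{m-n-1}$ by a finite group acting by homeomorphisms, which is Hausdorff; but this over-caution does no harm.
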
 
 \begin{proof}
 Consider the map $\Phi : \widehat {\mathcal{T}} \to (\R P^{n-1})^m$, where $\Phi(T) = \tilde T \in (\R P^{n-1})^m$, so $\Phi$ projectivizes the rows of $T$.   The unordered generalized cross ratio is the surjective map $\mathcal{UC}: (\R P^{n-1})^m \to (\R P^{n-1})^{m-(n+1)}$.  Given $T,S \in \widehat{\mathcal{T}}$, then $T \sim S$ if and only if $\mathcal{UC} (\Phi (T)) =\mathcal{UC} (\Phi (S)) $. 
The image of $\mathcal{UC}\circ \Phi$ is open since it consists of all sets of points in general position, and
$$\dim \mathcal{T}= \dim( \mathcal{UC} ( \Phi (\widehat{\mathcal{T}})))= (n-1) (m-n-1) = nm - n^2 -m +1.$$ 
 %
 %
\end{proof} 

Our earlier example, $ L _ \alpha \leq SL_7(\R)$, had a $2 \times 4$ matrix $T$, so $n=2$, and $m=4$.  We normalized by sending the first three rows to a projective basis of $\R P^1$, so $\dim Red(7) \geq 2\cdot 4 - 2^2 - 4+1 =1 $.



A set of hyperplanes is in \emph{general position} in $\R P ^n$, if the set of dual points in the dual projective space to these hyperplanes is in general position.  
Let $[L]$ denote the conjugacy class of a group $L$, and $T^t$ denote the transpose of $T$. 


\begin{prop}\label{infinitelymany} Suppose  $m \geq n+2$, and $n \geq 2$. The function $f : \mathcal{T} \to Red(m+n+1) $ given by $f([T]_{\mathcal{T}})=[L_T]$ is  well defined and injective.  \end{prop}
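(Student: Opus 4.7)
The plan is to split the claim into the two implications: $T\sim S \Rightarrow [L_T]=[L_S]$ (well-definedness) and $[L_T]=[L_S] \Rightarrow T\sim S$ (injectivity). Both reduce to Proposition~\ref{CR} applied to the row configurations $\tilde T,\tilde S\subset \R P^{n-1}$. First observe that every generic $T$ gives an $L_T$ in $Red(m+n+1)$: genericity forces every row of $T$ to be nonzero, so Lemma~\ref{islimit} applies.

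For well-definedness, unpack $T\sim S$ via Proposition~\ref{CR}: there exist $M\in GL_n(\R)$, a permutation $\sigma\in S_m$, and nonzero scalars $c_i$ with $T_{\sigma(i),\cdot}\, M = c_i\, S_{i,\cdot}$. I would then seek a conjugating element $Q\in SL_{m+n+1}(\R)$ of block-diagonal shape
\[
Q=\begin{pmatrix} D P_\sigma & 0 & 0 \\ 0 & 1 & 0 \\ 0 & 0 & M^{-1} \end{pmatrix},
\]
with $D$ diagonal built from the $c_i$ and $P_\sigma$ the permutation matrix of $\sigma$. The block product identity recorded just after Definition~\ref{defLT} shows directly that conjugation by $Q$ sends $\rho_T(a,b)$ to $\rho_S(a',b')$ where $(a',b')$ is a linear bijection of $(a,b)$; hence $QL_TQ^{-1}=L_S$. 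A rescaling of $Q$, combined if necessary with a coordinate sign-flip that preserves $L_T$, lands $Q$ in $SL_{m+n+1}(\R)$.

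For injectivity, suppose $[L_T]=[L_S]$; by Lemma~\ref{conjorb} there is a projective transformation $[Q]$ sending orbit closures of $L_T$ to orbit closures of $L_S$. Exactly as in the $L_\alpha$ warm-up, I would compute, with $U=\langle e_1,\ldots,e_{m+1}\rangle$, $W=\langle e_{m+2},\ldots,e_{m+n+1}\rangle$, $q\colon V\to V/U\cong W$, and $\lambda_i(y)=\sum_j T_{ij}y_j$, that a point $[x]$ with $q(x)=y\in \PR(W)$ nonzero has orbit closure $\PR(\langle x\rangle+V_y)$, where $V_y=\langle e_{m+1}\rangle + \sum_{i:\lambda_i(y)\neq 0}\langle e_i\rangle$. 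This exhibits $\PR(U)$ intrinsically as the fixed set of $L_T$, the typical orbit closures as the $(m+1)$-dimensional projective subspaces $\PR(U+\langle y\rangle)$ for $y$ off every $\PR(\ker\lambda_i)$, and the union of exceptional $(m+1)$-dimensional subspaces as sitting over the union $\bigcup_i \PR(\ker\lambda_i)\subset \PR(W)$ of $m$ distinct hyperplanes. Hence $[Q]$ preserves $\PR(U)$, descends to a projective transformation $\bar Q$ of $\PR(W)$, and carries $\{\PR(\ker\lambda_i^T)\}_{i=1}^m$ bijectively to $\{\PR(\ker\lambda_i^S)\}_{i=1}^m$. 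Dualizing hyperplanes to points turns this into a projective equivalence of the unordered augmented bases $\tilde T$ and $\tilde S$ in $\R P^{n-1}$, and Proposition~\ref{CR} then yields $\mathcal{UC}(\tilde T)=\mathcal{UC}(\tilde S)$, i.e., $T\sim S$.

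The main obstacle is the injectivity step, specifically promoting ``$[Q]$ preserves $\PR(U)$'' to ``$[Q]$ induces a projective transformation of $\PR(W)$ matching the two hyperplane configurations.'' This relies on the observation that if $[Q]$ preserves the projective subspace $\PR(U)$, any lift $Q$ preserves $U\subset V$ as a linear subspace, so $Q$ acts on the quotient $V/U\cong W$. The typical orbit closures together with the decomposition of the exceptional locus into its $m$ distinct hyperplane components then force $\bar Q$ to match the two configurations. The standing hypothesis $m\ge n+2$ is what guarantees the $m$ rows of $T$ form an augmented basis in $\R P^{n-1}$, so that Proposition~\ref{CR} is applicable.
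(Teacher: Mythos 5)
Your proposal is correct and follows essentially the same route as the paper: well-definedness via conjugation by a block-diagonal matrix acting on the off-diagonal block, and injectivity by identifying the exceptional locus of the $L_T$-action with the preimage of $m$ hyperplanes in $\PR(W)$, dualizing to an augmented basis, and invoking Proposition~\ref{CR} together with Lemma~\ref{conjorb}. If anything you are slightly more careful than the paper, which elides the permutation and row-scalars in the well-definedness step and the descent of $Q$ to $V/U$ in the injectivity step.
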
 

\begin{proof}  
First we show $f$ is well defined.  Suppose $[S]_{\mathcal{T}} = [T]_{\mathcal{T}}$.  Then there is a linear map $Q : \R ^n \to \R^n$ such that $Q$ maps the rows of $T$ to the rows of $S$.  That is, $Q( T^t )= S^t$, and taking the transpose of both sides, $T Q^t = S$.  Set $Q^t =P$.  Then $L_T$ is conjugate to $L_S$ by $ I_{m+1} \oplus P^{-1}$, because: 

$$\left( \begin{array}{c|c}
I & 0 \\
\hline
0 & P^{-1}
\end{array} \right) 
\left( \begin{array}{c|c}
I & T \\
\hline
0 & I
\end{array} \right) 
\left( \begin{array}{c|c}
I & 0 \\
\hline
0 & P^{-1} 
\end{array} \right) ^{-1} 
= 
\left( \begin{array}{c|c}
I & TP \\
\hline
0 & I
\end{array} \right) = 
\left( \begin{array}{c|c}
I & S \\
\hline
0 & I
\end{array} \right)
$$
 So if $[T]_{\mathcal{T}} = [S]_{\mathcal{T}}$ then $[L_T] = [L_S]$.  This shows $f$ is well-defined.

To prove $f$ is injective, we show if $[T]_\mathcal{T} \neq [S]_\mathcal{T}$, then the actions of $[L_T]$ and $[L_S]$ partition $\R P^{m+n}$ into orbit closures which are not projectively equivalent.  

Let $\{ e_1 , ... e_{m+n+1}\}$ be the standard basis for $V= \R ^{m+n+1}$.  Define 
$U = \langle e_1 , ... , e_{m+1} \rangle$, and $W = \langle e_{m+2} ,... , e_{m+n+1} \rangle$, then $V = U \oplus W$. Let $q: V \to V/ U \cong W$ be the quotient map. Given $[v] \in \PR(W)$, let $\mathcal{H}_v$ be the $(m+1)$-dimensional projective subspace $\mathcal{H}_v = \PR \langle e_1, ... , e_{m+1}, v  \rangle =  \PR \langle q^{-1} (v) \rangle $.  We show the orbit closure of a typical point $x \in \R P^{m+n} $ is $\mathcal{H}_v$, and the exceptional subspaces are the pre-image of $m$ hyperplanes in $\PR(W)$, which determine an invariant of $L_T$.

The orbit dimension function for $L_T$ by $\mathcal{R}_{T} := \mathcal{R}_{L_T}$, has maximum $M_T:= M_{L_T}$.  The action of $L_T$ on $\R P^{m+n}$ is given by 
\begin{equation}\label{LTact}
\begin{array}{l}
L_T . [x_1 : \cdot \cdot \cdot : x_{m+n+1}]= \\
 \displaystyle 
 [x_1 + a_1 (\sum _{i=1}^n T_{1i} x _ {m+1+i}) :x_2 + a_2 (\sum _{i=1}^n T_{2i} x _ {m+1+i})  :\cdot \cdot \cdot : \\
  \displaystyle  x_{m} + a_m( \sum _{i=1}^n T_{mi} x _ {m+1+i}) :x_{m+1} +\sum_{i=1}^n  x_{m+1+i}b_i: x_{m+2}: \cdot \cdot \cdot :x_{m+n+1} ].
  \end{array}
 \end{equation}
 
 Set 
\begin{equation}\label{phi}   \phi_j (x_{m+2}, ..., x_{m+n+1}) =  \sum _{i=1}^n T_{ji} x _ {m+1+i}, \hspace{.5in} 1 \leq j \leq m,\end{equation}
a collection of linear functionals $\phi_j : \R^{n} \to \R$. Then we may rewrite 
\begin{equation}\label{LT}
\begin{array}{l}
L_T . [x_1 : \cdot \cdot \cdot : x_{m+n+1}]= \\
 \displaystyle 
 [x_1 + a_1  \phi_1 (x_{m+2}, ..., x_{m+n+1}) :x_2 + a_2  \phi_2 (x_{m+2}, ..., x_{m+n+1})  :\cdot \cdot \cdot : \\
  \displaystyle  x_{m} + a_m  \phi_n (x_{m+2}, ..., x_{m+n+1}) :x_{m+1} +\sum_{i=1}^n  x_{m+1+i}b_i: x_{m+2}: \cdot \cdot \cdot :x_{m+n+1} ].
 \end{array}
 \end{equation}

Since $T \in \mathcal{T}$ is generic, any $n$ rows of $T$ are linear independent, so by \eqref{LT}, $M_T = m+1$.  
If $x \in \PR (U)$, then the group $L_T$ fixes $x$, so $\mathcal{R}_T (x) =0$.  
We want to find the exceptional points.  
From \eqref{LT} the coefficient on $a_i$ is $\phi_i$.  Thus $\mathcal{R}_T (x) < m+1$ if and only if $\phi _i $ is zero, i.e., $(x_{m+2}, ..., x_{m+n+1}) \in \textrm{ker} (\phi_i)$.

The set $W_j := \textrm{ker}(\phi_j)  \subset W$ is a hyperplane. Then $\mathcal{R}_T (x) < m+1 $  if and only if $x \in q^{-1} (W_j ) = U \oplus W_j$, for some $1 \leq j \leq m$. Thus, the set of exceptional points is the pre-image of the $m$ hyperplanes, $\PR(W_j) \subset \PR(W) \cong \R P^{n-1}$. Let $w_j\in\PR(W^*)$ denote the point in the dual projective space determined by the hyperplane $W_j\subset W$.

By hypothesis, $T \in \mathcal{T}$ is generic, so these hyperplanes are in general position.  The  points $\{w_j\}_{j=1}^m$ are in general position, and form an augmented basis, 
\begin{equation}\label{delT} \delta(T)\equiv \{w_1,\cdots,w_m\} \subset \PR(W^*)\cong\R P^{n-1} \end{equation} 

We are now able finish the proof that $f$ is injective.  Suppose $[T]_\mathcal{T},[S]_\mathcal{T} \in \mathcal{T}$ with $f([S]_\mathcal{T}) =f([T]_\mathcal{T})$.   That is, $L_S$ is conjugate to $L_T$, so Lemma \ref{conjorb} implies this conjugacy takes the exceptional hyperplanes in the orbit closures of $L_T$, to the exceptional hyperplanes in the orbit closures of $L_S$.  The dual conjugacy takes the dual augmented basis, $\delta(T)$, to the dual augmented basis, $\delta(S)$.   By Proposition \ref{CR}, $\mathcal{UC}(\delta(T)) = \mathcal{UC} (\delta(S))$, so there is a projective transformation taking $\delta(T)$ to $\delta (S)$.   A  row of $T$  determines a dual vector, $\phi_i$, with $\ker\phi_i=W_i$, dual to $w_i=[\phi_i]\in  \delta (T)$. So the dual transformation takes the (projectivized) rows of $T$ to the (projectivized) rows of $S$.  Thus $[T]_{\mathcal{T}} = [S]_{\mathcal{T}}$, and $f$ is injective. 
\end{proof} 

Proposition \ref{infinitelymany} shows there are infinitely many non-conjugate limits of the positive diagonal Cartan subgroup in $SL_k(\R)$ when $k \geq 7$.  We want to give bounds for $\dim Red(k)$.  
In the remainder of the section, set $k=m+n+1$. 

\begin{theorem}\label{homeo} Let $m -2 \geq n \geq 2$. 
The function $\hat f: \widehat{\mathcal{T}} \to \widehat {Red} (k)$ defined by $\hat{f} (T) = L_T$, is  continuous and one to one on an open subset, $X \subset \widehat{\mathcal{T}}$, and $\dim X = \dim \mathcal{T}$. 
\end{theorem}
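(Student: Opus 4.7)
The plan has two parts: continuity of $\hat f$ on all of $\widehat{\mathcal{T}}$, and the construction of a transversal $X$ on which $\hat f$ is injective and whose dimension matches $\dim\mathcal{T}$.

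For continuity I would use the explicit polynomial parameterization $\rho_T:\R^{m+n}\to L_T$ of Definition \ref{defLT} together with the standard Chabauty characterization of convergence $G_r\to G$ (every $g\in G$ is a limit of $g_r\in G_r$, and every convergent $g_r\in G_r$ has its limit in $G$). Since every $T\in\widehat{\mathcal{T}}$ is generic, each row of $T$ has a nonzero entry, so the parameters $(a,b)$ can be recovered from any element of $L_T$: the $b_i$ sit in row $m+1$, and each $a_j$ can be read off from a column where $T_{ji}\ne 0$. Given $T_r\to T$, any $g=\rho_T(a,b)\in L_T$ is the limit of $\rho_{T_r}(a,b)\in L_{T_r}$, and any convergent sequence $\rho_{T_r}(a_r,b_r)\to h$ forces $(a_r,b_r)\to(a,b)$ and $h=\rho_T(a,b)\in L_T$, so $L_{T_r}\to L_T$ in the Chabauty topology.

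For the second part I would first identify the fibers of $\hat f$: comparing the $(j,m+1+i)$-entries of the generator with $a_j=1$ and all other parameters zero shows that $L_T=L_S$ iff row $j$ of $S$ is a nonzero scalar multiple of row $j$ of $T$ for every $j$. Hence the fibers of $\hat f$ are the row-scaling orbits of the $(\R^*)^m$-action on $\widehat{\mathcal{T}}$. By the computation at the start of the proof of Proposition \ref{infinitelymany}, the right $GL_n$-action $T\mapsto TP$ corresponds to conjugation of $L_T$ in $SL_{m+n+1}(\R)$ by $I_{m+1}\oplus P^{-1}$, so the combined $(GL_n\times(\R^*)^m)$-action generates the equivalence $\sim$ defining $\mathcal{T}$; the only overlap between the two factors is the one-dimensional subgroup $\{(\lambda I_n,\lambda^{-1}I_m)\}$, and generic orbits therefore have dimension $n^2+m-1$. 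I would take $X$ to be the open subvariety of the global section obtained by normalizing the first $n$ rows of $T$ to $e_1,\ldots,e_n$, the $(n+1)$-st row to $(1,\ldots,1)$, and the first coordinate of each of the remaining $m-n-1$ rows to $1$, intersected with the locus where the row tuple is an augmented basis. Then $\dim X=mn-(n^2+m-1)=\dim\mathcal{T}$.

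Injectivity of $\hat f|_X$ is then immediate, because distinct points of $X$ have distinct projectivized row-tuples and so are not related by row-scaling. The main obstacle is verifying that $X$ is genuinely transverse to the combined action on a nonempty open set, i.e.\ that the stabilizer of a generic $T\in X$ inside $GL_n\times(\R^*)^m$ is exactly the one-dimensional overlap. The augmented-basis hypothesis $m\ge n+2$ is what makes this work: any $P\in GL_n$ fixing the first $n+1$ projectivized rows must be scalar, and any $D\in(\R^*)^m$ with $DT=TP$ for such a scalar $P$ must itself be scalar, so the full stabilizer equals the one-dimensional overlap and $X$ is a legitimate slice of the claimed dimension.
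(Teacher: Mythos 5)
Your argument is correct, and it follows the same overall strategy as the paper: prove continuity from the explicit parameterization $\rho_T$, then cut $\widehat{\mathcal{T}}$ down to a normalized slice $X$ with $\dim X=(n-1)(m-n-1)=\dim\mathcal{T}$ on which $\hat f$ is one to one. Two differences are worth recording. First, your continuity argument actually verifies Chabauty convergence $L_{T_r}\to L_T$ by recovering the parameters $(a,b)$ from matrix entries (using that every row of a generic $T$ is nonzero); the paper only observes that $T\mapsto\rho_T$ is a continuous linear map, which leaves the passage from convergence of parameterizations to convergence of their images implicit. Second, and more substantively, you identify the fibers of $\hat f$ itself: $L_T=L_S$ exactly when $S$ is obtained from $T$ by scaling each row by a nonzero constant, so the fibers are the $(\R^*)^m$ row-scaling orbits, and your slice accordingly normalizes one coordinate in each of the last $m-n-1$ rows. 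The paper's slice does not do this --- it takes the last $m-n-1$ rows in a full open neighborhood $\mathcal{N}\subset\textrm{Mat}_{(m-n-1)\times n}$, which still contains distinct matrices related by row scalings near $1$ and hence defining literally the same group $L_T$; correspondingly the set the paper writes down has dimension $n(m-n-1)$, and the asserted value $(n-1)(m-n-1)$ is really the dimension of its image under $\mathcal{UC}\circ\Phi$. Your normalization removes exactly this redundancy and makes both the injectivity claim and the dimension count hold on the nose, so your version is the more careful one. The only small gap on your side is the claim that the $GL_n\times(\R^*)^m$ action ``generates'' the equivalence $\sim$: by Proposition \ref{CR} the equivalence also allows an unordered matching of the rows, i.e.\ a permutation in $S_m$ on top of that action. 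This finite group does not affect the dimension count, and it is irrelevant to injectivity of $\hat f$ into $\widehat{Red}(k)$ (as opposed to $Red(k)$), but it should be acknowledged; the paper handles it by choosing the last $m-n-1$ rows in a neighborhood of a point with free $S_{m-n-1}$-orbit.
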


\begin{proof} 

Recall from Definition \ref{defLT} the linear map $\rho_T : \R ^{m+n} \to SL_{m+n+1} (\R) \subset \textrm{End}(\R ^{m+n+1})$.  Thus $\rho_T \in \textrm{Hom}(\R ^{m+n}, \textrm{Mat}_{m+n+1})$.  Since $T \in \R ^{m+n} = \textrm{Hom}(\R ^n, \R ^m)$, the map $ T \mapsto \rho_T$ is a continuous linear map, as it maps one matrix to a larger one.   Remember $L_T$ is defined as the image of $\rho_T$, so we view $L_T \subset \textrm{End}(\R ^{k})$.    Since $\hat{f} (T) = L_T$, the image $\hat{f} (\widehat{\mathcal{T}} ) \subset  \textrm{End}(\R ^{k})$, and $\hat{f}$ is continuous. 

Define $X \subset \widehat {\mathcal{T}}$ as follows.  The symmetric group $S_{m-n-1}$ acts on $\textrm{Mat}_{(m-n-1) \times n}$ by permuting rows.   Pick a matrix $A_0 \in \textrm{Mat}_{(m-n-1) \times n}$ so that $S_{m-n-1}$ acts freely on the orbit of $A_0$.   Let $\mathcal{N}$ be a tiny neighborhood around $A_0$.  Denote $v = (1, ..,1)$ a row vector. Set 
$$X := \{  T= \left (\begin{array}{c}I_n \\ v \\ A \end{array} \right) : A \in \mathcal{N} \} \subset \widehat{\mathcal{T}}.$$ 
Then $X$ is the collection of $m \times n$ matrices where the first $n+1$ rows have a projectivization that is the standard basis for $\R P^{n-1}$, and the projectivization of the last $m-n-1$ rows is in general position in $\R P^{n-1}$ (because $T$ is generic).   Since $\mathcal{N}$ is a tiny neighborhood around $A$, no two points in $X$ have the same unordered generalized cross ratio. Thus $X$ contains one representative of each equivalence class in $\widehat{\mathcal{T}}$, and $f: X \to \widehat{Red}(k)$ is one to one.   
By the same argument as in the proof of Proposition \ref{dimT}, 
$$\dim X= \dim( \mathcal{UC} ( \Phi (X)))= (n-1) (m-n-1) = nm - n^2 -m +1 = \dim \mathcal{T}.$$  
\end{proof}

\begin{cor}\label{lb} If $k \geq 7$, then $\dim Red(k)  \geq \frac{k^2 -8k +12}{8}$.
\end{cor}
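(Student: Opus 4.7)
The plan is to invoke Theorem \ref{homeo} with an optimal choice of the partition $k = m + n + 1$. For any integers $(m,n)$ with $n \geq 2$ and $m \geq n+2$, Theorem \ref{homeo} (combined with Proposition \ref{infinitelymany}, which guarantees non-conjugate $L_T$'s arising from distinct equivalence classes in $\mathcal{T}$) yields
$$\dim Red(k) \geq \dim \mathcal{T} = nm - n^2 - m + 1.$$
Substituting $m = k - n - 1$, this reduces to the downward-opening quadratic
$$g(n) := -2 n^2 + nk - k + 2,$$
which is maximized over $\R$ at $n^* = k/4$ with $g(n^*) = k^2/8 - k + 2$.

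The next step is to round to integers. I would choose $n$ to be the nearest integer to $k/4$, so that $|n - n^*| \leq 1/2$. Using $g(n) = g(n^*) - 2(n - n^*)^2$ we then obtain
$$g(n) \geq \frac{k^2}{8} - k + 2 - \frac{1}{2} = \frac{k^2 - 8k + 12}{8},$$
which is exactly the claimed bound. This matching is tight: the $1/2$ incurred by rounding precisely absorbs the gap between $k^2/8 - k + 2$ and $k^2/8 - k + 3/2$, so no improvement is available via this route.

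The main point requiring care is verifying that the chosen integer $n$ satisfies the hypotheses of Theorem \ref{homeo}, namely $n \geq 2$ and $n \leq (k-3)/2$ (the latter equivalent to $m \geq n + 2$). For $k \geq 7$ we have $k/4 \geq 7/4 > 3/2$, so the nearest integer is automatically at least $2$. For $k \geq 9$, one checks $\lceil k/4 \rceil \leq (k-3)/2$ directly. For the two boundary cases $k = 7, 8$, I would simply take $n = 2$ (which is the nearest integer to $k/4$ in both cases) and verify that $m = k - 3 \geq n + 2 = 4$. Once these feasibility checks are in place, the corollary follows immediately from the inequality above.
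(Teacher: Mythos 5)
Your proposal is correct and follows essentially the same route as the paper: apply Theorem \ref{homeo} and Proposition \ref{dimT} to get $\dim Red(k) \geq nm-n^2-m+1$, then maximize $g(n)=kn-2n^2-k+2$ over integer $n$ with $m=k-n-1$. The only difference is cosmetic: where the paper disposes of the integrality issue by a case check on $k \bmod 4$, you use the identity $g(n)=g(k/4)-2(n-k/4)^2$ with $|n-k/4|\le 1/2$, and you verify the constraints $n\ge 2$, $m\ge n+2$ explicitly, which is a slightly cleaner write-up of the same argument.
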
 
\begin{proof} 
 Proposition \ref{dimT} says  $\dim X = nm- n^2 -m +1$, and Theorem \ref{homeo} 
implies $\dim \widehat{Red}(k) \geq \dim X$. 
Since $k \geq 7$, we may choose $m-2 \geq n \geq 2$.   

 We may change the size of the $m$ by $n$ matrix (as long as $m-2 \geq n \geq 2$), so $\dim Red(k)$ is bounded below by the maximum of 
 $mn - n^2 -m +1$. 
 Since $m+n+1=k$, and $k$ is fixed, we want to maximize 
 $$g(n) = n(k-n-1) - n^2 - (k-n-1)+1  = kn - 2n^2 -k +2.$$ 
  The maximum occurs at 
 $n= \frac{k}{4}$ and $m= \frac{3k-4}{4}$. 
However, we need $m,n \in \Z$.  An easy computation with $k \equiv 0,1,2,3 \pmod 4$ shows 
the maximum of $nm - n^2 +m +1$ subject to $k =m+n+1$ for $m,n \in \Z$ is at least $\frac{k^2 -8k +12}{8}$. 
\end{proof}

In particular, $\frac{k^2 -8k +12}{8} > 0$ for $k \geq 7$. Below is the proof of an upper bound of $\dim Red(k)$, given in \cite{IM} for (Krull) dimension of $Red(k)$. 

\begin{theorem}\label{ub} $\dim Red(k) \leq k^2-k$. 
\end{theorem}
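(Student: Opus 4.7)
My plan is to bound $\dim Red(k)$ by first showing $\dim\widehat{Red}(k)\le k^2-k$ via an orbit--stabilizer computation, and then using that $Red(k)$ is a topological quotient of $\widehat{Red}(k)$ and hence cannot have larger dimension. By definition, $\widehat{Red}(k)$ is the Chabauty closure of the $SL_k(\R)$-conjugation orbit of the diagonal Cartan $C$. The stabilizer of $C$ under this action is the normalizer $N_{SL_k(\R)}(C)$, which consists of monomial matrices: any $g$ normalizing $C$ must permute the simultaneous eigenlines of a regular element of $C$, so $N_{SL_k(\R)}(C)$ is an extension of $C$ by the Weyl group $S_k$, giving $\dim N_{SL_k(\R)}(C)=\dim C=k-1$. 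Orbit--stabilizer then yields
$$\dim\bigl(SL_k(\R)\cdot C\bigr)=\dim SL_k(\R)-\dim N_{SL_k(\R)}(C)=(k^2-1)-(k-1)=k^2-k.$$

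To pass from this to a bound on the closure, I would embed $\widehat{Red}(k)$ into the Grassmannian $\textrm{Gr}(k-1,\mathfrak{sl}_k(\R))$ by $L\mapsto\textrm{Lie}(L)$. This map is injective on connected subgroups, continuous in the Chabauty topology, and equivariant for the adjoint action. The image of the orbit of $C$ is an orbit under an algebraic group action on a projective variety, whose Zariski closure is a finite union of orbits of no greater dimension, and whose Chabauty closure in $\widehat{Red}(k)$ agrees with this algebraic closure for connected abelian subgroups. Therefore $\dim\widehat{Red}(k)\le k^2-k$. Since the projection $\widehat{Red}(k)\to Red(k)$ is continuous and surjective with the quotient topology, $\dim Red(k)\le\dim\widehat{Red}(k)\le k^2-k$.

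The main technical subtlety is justifying that Chabauty closure does not increase the orbit dimension: in an arbitrary metric space, the closure of a $d$-dimensional set can be larger-dimensional (e.g.\ $\Q\subset\R$). The Lie-algebra embedding into a finite-dimensional algebraic variety is what circumvents this, reducing the claim to the standard algebraic fact that an orbit closure is a union of orbits of no larger dimension. Everything else in the argument is a routine orbit--stabilizer computation together with the observation that continuous surjections do not increase covering dimension.
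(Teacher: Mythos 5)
Your proof is correct in outline and shares its skeleton with the paper's: both start from the normalizer computation $\dim N_{SL_k(\R)}(C)=k-1$, conclude that the conjugation orbit of $C$ has dimension $(k^2-1)-(k-1)=k^2-k$, and then argue that passing to the closure and then to the conjugacy quotient cannot raise the dimension. Where you diverge is in how the closure step is justified. The paper stays inside $\mathrm{End}(\R^k)$ and uses real algebraic geometry: the set of conjugates of $C$ is semi-algebraic (\cite{BCR} Props.~2.1.8, 2.2.7), and the dimension theory of semi-algebraic sets (\cite{BCR} Props.~2.8.2, 2.8.13, in particular $\dim(\overline S\setminus S)<\dim S$) bounds the boundary. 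You instead linearize via $L\mapsto\mathrm{Lie}(L)$ into $\mathrm{Gr}(k-1,\mathfrak{sl}_k(\R))$ and invoke the fact that an orbit closure of an algebraic group action is a union of orbits of no greater dimension. These are two faces of the same phenomenon (over $\R$ the ``finite union of orbits'' statement is itself most cleanly proved via semi-algebraicity), so your route is not more elementary, but it does make the geometric picture more transparent and correctly isolates the real issue, namely that closures can raise dimension in general metric spaces.

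Two points in your version deserve more care than you give them. First, the continuity and injectivity of $L\mapsto\mathrm{Lie}(L)$ on Chabauty limits needs the fact that every limit of conjugates of $C$ is again a connected $(k-1)$-dimensional subgroup (this is \cite{IM} Prop.~1, quoted earlier in the paper); without constancy of dimension the Lie functor need not be Chabauty-continuous, and without connectedness it is not injective. Second, a continuous injection or a continuous quotient map does not in general preserve covering dimension bounds, so your last two sentences are really appeals to the semi-algebraic (or at least locally compact, finite-fiber-quotient) structure of everything in sight rather than to pure topology; the paper is equally terse on the quotient step, so this is a shared, not a new, gap.
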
 
\begin{proof}  Let $C$ denote the positive diagonal Cartan subgroup, and let $P \in GL_k (\R)$.  By \cite{OV} Theorem 1, or \cite{Vard} Theorem 2.9.7,  the dimension of the set of all conjugates of $C$ is $k^2 -k$, since $P C P^{-1} = C$ if and only if $P$ is a diagonal matrix or a permutation matrix.     
Since $C$ is a semi-algebraic set (\cite{BCR} Proposition 2.1.8), the set of conjugates of $C$ is a semi-algebraic set (\cite{BCR} Proposition 2.2.7).  Thus the set of conjugacy limits of $C$ is the boundary of the Zariski closure of the set of conjugates.  Applying  Propositions 2.8.2 and 2.8.13 from \cite{BCR}, gives $\textrm{dim} (Red(k)) \leq k^2 -k$. 
\end{proof} 

Corollary \ref{lb} and Theorem \ref{ub} imply Theorem \ref{maincov}.


\end{section}

\begin{section}{Abelian Groups which are Not Conjugacy Limit Groups}

In this section, we give examples of elements of $Ab(n ) - Red(n)$.  There are two properties of conjugacy limit groups of $C$ 
which are not universal amongst abelian groups. The first property is a conjugacy limit group is {\em flat}, and
the
second is that it contains a one parameter subgroup with a particular Jordan block structure.


  Suppose $L$ is a conjugacy limit of $C$ in $SL_n( \R)$.   Then we claim $L$ is the intersection of a vector space with $SL_n(\R) \subset \textrm{End}(\R ^n),$ which is a vector space.  Such a group is a \emph{flat group}. 
 The positive diagonal Cartan subgroup is flat, and conjugacy is a linear map, so it preserves this property. 
 Thus conjugacy limits of $C$ are flat groups. 

\begin{defn}  Let $\mu_k : \R ^{k-1} \to SL_k (\R)$ be the representations below for $k = 5,6$. 
$$\mu_5 ( a, b,c,d) =
\left ( \begin{array}{ccccc}
1 & a & 0 & \frac{a^2}{2} & b\\
0 & 1 & 0 & a&0\\
0 & 0 & 1 & c & d\\
0&0&0& 1 & 0 \\
0&0&0&0&1
\end{array} \right) ,
\mu_6 ( a,b,c,d,e) =
\left( \begin{array}{cccccc}
1 & a & \frac{ a^2}{2} & 0 & b & c\\
0 & 1 & a & 0 & 0 & 0 \\
0 & 0& 1 & 0 & 0 & 0\\
0 & 0& 0& 1 & d & e\\
0 & 0& 0 & 0 & 1 & 0\\
0 & 0& 0& 0& 0& 1
\end{array} \right) 
$$
 
 Set $M_k \leq SL_k( \R)$ to be the respective images of $\mu_k$. 
\end{defn} 

It is easy to check that $M_k$ is an abelian group of dimension $k -1$.   Moreover, neither is a limit of  $C$, since these are not flat groups. 


Thus we have given examples of elements in $Ab(n) - Red (n) $ for $n = 5,6$.  This shows $Ab(n ) \neq Red(n)$ when $n=5,6$, which answers Question A in \cite{IM}. 
By Proposition 2 in \cite{IM} or Lemma 3.4 in \cite{Haettel}, there is an abelian subalgebra of dimension $n-1$ which is not the conjugacy limit of a Cartan subalgebra. 
Combining these results implies $Ab(n) = Red(n)$ if and only if $n \leq 4$.  For $n = 5,6$, we have shown $Red(n)\subsetneq  Ab(n)$. This completes the proof of Theorem \ref{mainex}. 

We give another property satisfied by conjugacy limit groups of $C$, and an example of an element of $Ab(8) - Red(8) $, which is a flat group, but does not satisfy this additional property.    Thus to determine if a group is a conjugacy limit of $C$, it is necessary but not sufficient for the group to be a flat group.

 Suppose $\R^{n-1} \cong G \leq SL_n(\R)$, and let $\rk (A)$ denote the rank of a matrix $A$.   Define the \emph{tier} of $G$ to be $\displaystyle \textrm{tier}(G) = \tau (G) := \max _{g \in G} \rk(g - I_n)$. If $\rk (g - I_n) = \tau (G)$, then $g$ is \emph{generic}. 
In the special case when $ G$ is a unipotent group, one may compute the tier from the Jordan Normal Form (JNF) of a generic group element, by counting the number of off-diagonal entries.

  \begin{prop}\label{rank} Suppose $G \leq SL_n(\R)$ is a unipotent group, 
and $L$ is a conjugacy limit of $G$.  Then $\tau (L) \leq \tau (G)$. 
  \end{prop}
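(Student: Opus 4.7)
\medskip
\noindent\textbf{Proof proposal.} My plan is to reduce the statement to two facts: that rank is preserved under conjugation, and that rank is lower semicontinuous in the matrix entry topology. The unipotence hypothesis is not essential for the argument but fits the context, where $\tau$ is meant to read off the Jordan block structure.

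First, I would unpack what ``$L$ is a conjugacy limit of $G$'' means in the Chabauty topology: there is a sequence $P_r\in SL_n(\R)$ with $P_r G P_r^{-1}\to L$, and this implies that every element $l\in L$ may be written as $l=\lim_{r\to\infty} P_r g_r P_r^{-1}$ for some sequence $g_r\in G$. Fix such an $l\in L$ together with an approximating sequence $g_r$.

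Next, I would use the conjugation invariance
\[
P_r g_r P_r^{-1} - I_n \;=\; P_r (g_r - I_n) P_r^{-1},
\]
so $\rk\bigl(P_r g_r P_r^{-1}-I_n\bigr)=\rk(g_r-I_n)\le \tau(G)$ by the definition of tier. Thus every element of the approximating sequence $P_r g_r P_r^{-1}-I_n$ lies in the algebraic set
\[
\{A\in \mathrm{Mat}_{n\times n}(\R) : \rk(A)\le \tau(G)\},
\]
which is closed since it is cut out by the vanishing of all $(\tau(G)+1)\times(\tau(G)+1)$ minors.

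Since $P_r g_r P_r^{-1}\to l$ in $\mathrm{End}(\R^n)$, we obtain $P_r g_r P_r^{-1}-I_n\to l-I_n$, and by closedness of the above set, $\rk(l-I_n)\le \tau(G)$. This bound holds for every $l\in L$, so taking the maximum gives $\tau(L)\le \tau(G)$. The only place one could stumble is in justifying the Chabauty‐limit characterization (condition (a) for Chabauty convergence), but this is standard (see \cite{Chab,Harpe}); otherwise the argument is just rank semicontinuity and invariance under conjugation, and is essentially automatic.
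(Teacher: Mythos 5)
Your proposal is correct and follows essentially the same route as the paper: both arguments come down to conjugation-invariance of $\rk(g-I_n)$ plus lower semicontinuity of rank (which you phrase, equivalently, as closedness of the determinantal set cut out by the $(\tau(G)+1)\times(\tau(G)+1)$ minors). Your write-up is in fact slightly more careful than the paper's, since you let the group element $g_r$ vary along the sequence rather than fixing a single $g$, and you correctly observe that the unipotence hypothesis plays no real role in the rank estimate.
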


  \begin{proof} 
  Given $l \in L$, let $(g_m)$ be a sequence of conjugates of $g \in G$ that converges to $l$.  Since $G$ and $L$ are unipotent, $g - I_n$ and $l -I_n$ are are conjugate to strictly upper triangular matrices.  Passing to a convergent subsequence of $(g_m)$, we may assume the rank of $g_m$ is constant.  Since rank is lower semi-continous, 
  $$ \liminf_{m \to \infty} \rk (g_m -I_n)  \geq \rk (l -I_n)$$ 
  and the result follows. 
  \end{proof}
  

Suppose $G \leq SL_n(\R)$ is isomorphic to $(\R ^{n-1},+)$.  A \emph{flag of subgroups} in $G$ is a collection of subgroups $H_i \cong ( \R ^i , +)$ with $1 \leq i \leq n-1$, and $H_{i-1} \leq H_i$. 
  
  \begin{cor}\label{ranklim} If $L$ is a conjugacy limit of $C$, then $L$ contains a flag of subgroups, $H_i$, with 
 $ \tau ( H_i) \leq i$ for all $i$.  In particular, 
 $L$ contains a 1 parameter subgroup $H_1$ with $ \tau (H_1) =1$. 
  \end{cor}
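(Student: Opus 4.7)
The plan is to build the flag $H_1 \leq \cdots \leq H_{n-1}$ inside $L$ by stratifying the Lie algebra $\mathfrak{l}:=\mathrm{Lie}(L)$ according to where elements send $\R^n$ under an $L$-invariant complete flag. After a preliminary conjugation (using that limits of $C$ are abelian and, in the interesting case, unipotent), $\mathfrak{l}$ sits in the strictly upper triangular matrices, and I take the standard flag $V_k = \mathrm{span}(e_1, \dots, e_k)$, for which $\mathfrak{l}\cdot V_k \subseteq V_{k-1}$.

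For each $i \in \{1, \dots, n-1\}$, let $\mathfrak{h}_i := \{X \in \mathfrak{l} : X(\R^n) \subseteq V_i\}$. Any $X \in \mathfrak{h}_i$ satisfies $\mathrm{rk}(X) \leq i$, and since $X$ is nilpotent, the power-series expansion of $\exp$ gives $\mathrm{rk}(\exp(X) - I_n) = \mathrm{rk}(X) \leq i$.

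The technical heart of the argument is the dimension bound $\dim\mathfrak{h}_i \geq i$ for all $i$. I would prove this by tracking the analogous filtration in the Cartan: $\mathfrak{h}_i^{\mathfrak{c}} := \{X\in\mathfrak{c}: X(\R^n)\subseteq V_i\}$ consists of traceless diagonal matrices supported on $e_1,\dots,e_i$ and has dimension $i-1$. Under the degeneration $P_r\mathfrak{c}P_r^{-1}\to \mathfrak{l}$, containment of images in flag pieces is preserved by the lower semicontinuity of rank (the mechanism underlying Proposition \ref{rank}), and the unipotent direction acquired in the limit adds one new dimension at each stratum, giving $\dim \mathfrak{h}_i \geq i$.

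Given this bound, I inductively pick a basis $X_1, \dots, X_{n-1}$ of $\mathfrak{l}$ with $X_k \in \mathfrak{h}_k \setminus \mathrm{span}(X_1, \dots, X_{k-1})$, and set $H_i := \exp(\mathrm{span}(X_1, \dots, X_i))$. Every element of $\mathrm{span}(X_1, \dots, X_i)$ has image in $V_i$, since each summand sends $\R^n$ into some $V_k \subseteq V_i$, so $\tau(H_i) \leq i$. This produces the flag $H_1 \leq H_2 \leq \cdots \leq H_{n-1} \leq L$ with the required tier bounds. The ``in particular'' clause is immediate: $H_1 \cong \R$ is nontrivial, so $\tau(H_1) \geq 1$, which combined with $\tau(H_1) \leq 1$ forces equality. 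The main obstacle is the dimension bound $\dim\mathfrak{h}_i \geq i$: a generic $(n-1)$-dimensional abelian subalgebra of strictly upper triangular matrices need not satisfy it, so the argument must exploit the specific degenerate origin of $\mathfrak{l}$ as a limit of $\mathfrak{c}$ rather than relying on abstract structural features of $\mathfrak{l}$ alone.
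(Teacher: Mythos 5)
Your strategy is genuinely different from the paper's --- you try to build the flag \emph{intrinsically} inside $\mathfrak{l}=\mathrm{Lie}(L)$ by intersecting with the stabilizers of a complete flag $V_\bullet$ chosen after the degeneration --- but it has a gap exactly where you flag it: the bound $\dim\mathfrak{h}_i\geq i$ is asserted, not proved, and the mechanism you offer for it does not do the job. Lower semicontinuity of rank (the content of Proposition \ref{rank}) controls $\rk(X)$ for a limit of conjugates $P_rX_0P_r^{-1}$; it says nothing about the \emph{image} of the limit landing inside a prescribed subspace $V_i$ of a flag you chose for $\mathfrak{l}$ only after passing to the limit. Since the conjugating matrices $P_r$ diverge, the subspaces $P_r(\mathrm{span}(e_1,\dots,e_i))$ need not converge to anything related to your $V_i$, so there is no induced map from the filtration $\mathfrak{h}_i^{\mathfrak{c}}$ of the Cartan to your filtration $\mathfrak{h}_i$ of $\mathfrak{l}$, and the sentence ``the unipotent direction acquired in the limit adds one new dimension at each stratum'' is a hope, not an argument. (The rest of the construction --- choosing $X_k\in\mathfrak{h}_k\setminus\mathrm{span}(X_1,\dots,X_{k-1})$, the identity $\rk(\exp X-I)=\rk(X)$ for nilpotent $X$, and the conclusion $\tau(H_i)\le i$ --- is fine \emph{conditional} on that bound.) A secondary issue is your reduction to the unipotent case: limits of $C$ that retain semisimple directions (block-diagonal partial limits, or $C$ itself) are discarded with the phrase ``in the interesting case,'' but the statement is about all limits.

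The paper avoids the problem entirely by building the flag \emph{extrinsically}: it takes the flag of coordinate subgroups $C_1\leq C_2\leq\cdots$ of the Cartan itself, in which successively more diagonal entries are allowed to vary, pushes each $C_i$ through the \emph{same} conjugating sequence $P_r$, and lets $H_i$ be the resulting limit inside $L$. Then Proposition \ref{rank} applies to each pair $(C_i,H_i)$ separately and gives $\tau(H_i)\leq\tau(C_i)$ with no need to locate $H_i$ relative to a flag of $\R^n$ or to compute any dimension inside $\mathfrak{l}$. If you want to rescue your approach, the honest fix is to \emph{define} $\mathfrak{h}_i$ as the limit of the Lie algebras of the $C_i$ rather than as $\{X\in\mathfrak{l}:X(\R^n)\subseteq V_i\}$ --- at which point you have reproduced the paper's argument.
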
 
  \begin{proof} Suppose $P C P^{-1} \to L$ by some $P \in SL_n(\R)$.  Set $C_1 =  \textrm{diag} \langle a, 1,1,...,1 \rangle ,$ and let $L_1$ be the conjugacy limit of $C_1$ by $P$.  By Proposition \ref{rank}, $\tau ( L_1) \leq \tau (C_1)=1$.  Since $L_1 \cong \R$, then $L_1$ cannot be the identity group, so $\tau (L_1) =1$. All of the elements in $C_1$ are contained in $C$, and their limits under conjugacy by $P$ are contained in $L$.  Therefore $L_1$ is a tier 1 subgroup of $L$. 
  
   In general, $C$ has a flag of subgroups with tier $1,..., n-1$, as more of the entries on the diagonal are allowed to vary.  The conjugacy limits of this flag of subgroups of $C$ give a flag of conjugacy limits. 
  \end{proof}


Set $E \leq SL_8(\R)$ to be the image of the representation $\rho: \R^7 \to SL_8 (\R)$: 
  $$\rho( a,b,c,d,e,f,g)=
  \left( \begin{array}{cccccccc}
  1&0&0&0 & 0 & c& g &f\\
  0&1&0&0&c &b &f&e\\
  0&0&1&0 & b&a&e&d\\
  0&0&0&1&a&g&d&0\\
  0&0&0&0&1&0&0&0\\
  0&0&0&0&0&1&0&0\\
  0&0&0&0&0&0&1&0\\
  0&0&0&0&0&0&0&1
  \end{array} \right).
  $$

  It is easy to check that $E$ is an abelian subgroup, since matrix multiplication is given by 
  $$\left( \begin{array}{c|c}
  I & A \\
  \hline 
  0 &I
\end{array}   \right)
\left( \begin{array}{c|c}
  I & B \\
  \hline 
  0 &I
\end{array}   \right)
=
\left( \begin{array}{c|c}
  I & A+B \\
  \hline 
  0 &I
\end{array}   \right). 
  $$
  
  \begin{prop}\label{Eparam} The group $E$ has no 1 parameter subgroups of tier 1. 
  \end{prop}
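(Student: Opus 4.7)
Since $E$ is abelian, every $1$-parameter subgroup of $E$ has the form $\{\rho(t\vec{x}) : t \in \R\}$ for some nonzero $\vec{x} = (a,b,c,d,e,f,g) \in \R^7$. The matrix $\rho(t\vec{x}) - I_8$ has upper-right $4\times 4$ block $tA(\vec{x})$, where $A(\vec{x})$ is the variable block in the definition of $\rho$, and zeros elsewhere; hence the tier of this $1$-parameter subgroup equals $\rk(A(\vec{x}))$ for every $\vec{x}\neq 0$. The plan is therefore to show that $\rk(A(\vec{x})) \leq 1$ forces $\vec{x} = 0$, which will rule out tier $1$.

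For this I would exploit the fact that the seven variables appear in repeated anti-diagonal positions of $A$, producing many $2\times 2$ submatrices with a $0$ on the diagonal and the same variable in both anti-diagonal positions, of the form $\bigl(\begin{smallmatrix}0 & x\\ x & y\end{smallmatrix}\bigr)$ or $\bigl(\begin{smallmatrix} y & x \\ x & 0\end{smallmatrix}\bigr)$. Each such submatrix has determinant $-x^{2}$, and the rank-$1$ hypothesis forces it to vanish. The $\{1,2\}\times\{1,2\}$ minor of $A$ immediately gives $c=0$ and the $\{3,4\}\times\{3,4\}$ minor gives $d=0$. Substituting these into $A$, a sequence of further $2\times 2$ minors reduce to $-x^{2}$ in the next unkilled variable: $\{1,4\}\times\{2,3\}$ becomes $cd - g^{2} = -g^{2}$, killing $g$; then $\{1,2\}\times\{3,4\}$ becomes $ge - f^{2} = -f^{2}$, killing $f$; then $\{2,3\}\times\{1,2\}$ becomes $ca - b^{2} = -b^{2}$, killing $b$; then $\{2,3\}\times\{3,4\}$ becomes $fd - e^{2} = -e^{2}$, killing $e$; and finally $\{3,4\}\times\{1,2\}$ becomes $bg - a^{2} = -a^{2}$, killing $a$. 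So $\vec{x}=0$, as required.

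The only genuine obstacle is to find an order in which to process the seven $2\times 2$ minors so that, at each step, the remaining quadratic simplifies to a pure $-x^{2}$ in a single uncanceled variable; once this sequencing is fixed, the argument is just algebraic bookkeeping, and no deeper structural input (such as a rank-$1$ factorization $A=uv^{T}$ or a classification of the orbit of $A$ under $GL_4\times GL_4$) is required.
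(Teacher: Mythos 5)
Your proposal is correct and follows essentially the same route as the paper: both reduce the claim to showing that the vanishing of all $2\times 2$ minors of the upper-right $4\times 4$ block forces all seven parameters to be zero, and both do this by processing a chain of minors each of which collapses to $-x^2$ for the next variable. Your explicit listing of the seven minors (and the preliminary observation that the tier of the one-parameter subgroup through $\vec{x}\neq 0$ is exactly $\rk(A(\vec{x}))$) merely makes precise the paper's ``continuing in this fashion'' step.
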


\begin{proof} A matrix has rank 1 if and only if every $2 \times 2$ minor is zero.   We show that $\rho(a,b,c,d,e,f,g)-I_8$ has rank 1 if and only if $(a,b,c,d,e,f,g) =(0,...,0)$.  Consider the $2 \times 2$ minors of 
 $$ \left( \begin{array}{cccc}
 0 & c& g &f\\
c &b &f&e\\
 b&a&e&d\\
 a&g&d&0\\
 \end{array} \right) .$$
 
 Since the upper left minor must be zero, then $c =0$.  Looking at the minor directly below, implies $b=0$.  Continuing in this fashion, $b=0, a=0, d=0, e=0, f=0$ and $g=0$. (Alternatively, take all of the minors, and check $(0,0,...,0)$ is the only solution.)  Thus $\rho(a,b,c,d,e,f)-I_8$ has rank 1 if and only if $(a,b,c,d,e,f,g) =(0,...,0)$.  But if $(a,b,c,d,e,f,g) =(0,...,0)$ then $\rho(0, ..,0) - I_8$ is the zero matrix, with rank 0. Therefore $E$ (the image of $\rho$) contains no tier 1 subgroups.
\end{proof}

Combining Corollary \ref{ranklim} and Proposition \ref{Eparam}, shows the abelian group, $E$, is \emph{not} a conjugacy limit of $C$.  Thus there are two necessary conditions for a group to be a limit group: the group must be a flat group, and contain a tier 1 subgroup.  Are these conditions sufficient? 

Further, there are many more questions we might ask about the spaces $Red(n)$ and $Ab(n)$.  For example: are they connected?  Does every component of $Ab(n)$ contain a component of $Red(n)$, and is it possible to retract from $Ab(n)$ to $Red(n)$?  What properties characterize $Red(n)$ that are not inherited by $Ab(n)$?

\end{section}

\thanks{The author would like to thank \emph{Daryl Cooper} for many helpful discussions, and \emph{Thomas Haettel}, whose work inspired the paper.
The author was partially supported by NSF grants DMSÐ0706887, 1207068 and 1045292. 
The author acknowledges support from U.S. National Science Foundation grants DMS 1107452, 1107263,
1107367 RNMS: \emph{GEometric structures And Representation varieties (the GEAR Network).}}


  \end{document}